\numberwithin{equation}{section}
\theoremstyle{plain}
\newtheorem{thm}{Theorem}[section]
\newtheorem{theorem}[thm]{Theorem}
\newtheorem{lemma}[thm]{Lemma}
\newtheorem{proposition}[thm]{Proposition}
\theoremstyle{definition}
\newtheorem{remark}[thm]{Remark}
\newtheorem{definition}[thm]{Definition}
\newtheorem{claim}[thm]{Claim}
\newtheorem{question}[thm]{Question}
\newtheorem{construction}[thm]{Construction}
\numberwithin{equation}{section}
\newcommand{\Spec}{{\rm Spec \,}}
\newcommand{\C}{{\mathbb C}}
\newcommand{\F}{{\mathbb F}}
\renewcommand{\P}{{\mathbb P}}
\newcommand{\R}{{\mathbb R}}
\newcommand{\Z}{{\mathbb Z}}
\newcommand{\id}{{\rm id\hspace{.1ex}}}
\newcommand{\Aut}{{\rm Aut\hspace{.1ex}}}
\title [non-finitely generated discrete automorphism group]{Smooth rational projective varieties with non-finitely generated discrete automorphism group and infinitely many real forms}
\author{Tien-Cuong Dinh}
\address{Department of Mathematics, National University of Singapore, 10, 
Lower Kent Ridge Road, Singapore 119076}
\email{matdtc@nus.edu.sg}
\author{Keiji Oguiso}
\address{Mathematical Sciences, the University of Tokyo, Meguro Komaba 3-8-1, Tokyo, Japan, and National Center for Theoretical Sciences, Mathematics Division, National Taiwan University, 
Taipei, Taiwan}
\email{oguiso@ms.u-tokyo.ac.jp}
\author{Xun Yu}
\address{Center for Applied Mathematics, Tianjin University, 92 Weijin Road, Nankai District,
Tianjin 300072, P. R. China.
}
\email{xunyu@tju.edu.cn}
\thanks{The first named author is supported by the Tier 2 grant MOE-T2EP20120-
0010. The second named author is supported by JSPS Grant-in-Aid (S) 15H05738, JSPS Grant-in-Aid 20H00111, 20H01809, and by NCTS Scholar Program. The third named author is supported by NSFC (No.12071337, No. 11701413, and No. 11831013).}
\subjclass[2010]{14J50, 14P05}
\begin{document}

\maketitle

\begin{abstract}
We show, among other things, that for each integer $n \ge 3$, there is a smooth complex projective rational variety of dimension $n$, with discrete non-finitely generated automorphism group and with infinitely many mutually non-isomorphic real forms. Our result is inspired by the work of Lesieutre and the work of Dinh and Oguiso. 
\end{abstract}

\section{Introduction}

It is quite recent that negative answers are given to the following long standing natural questions (see eg. \cite{BS64}, \cite{DIK00}, \cite{Kh02}, \cite{CF20} for positive directions):
\begin{question}\label{quest1} Let $V$ be a smooth complex projective variety of dimension $\ge 2$. 
\begin{enumerate}
\item Is the automorphism group $\Aut (V)$ finitely generated if $\Aut (V)$ is discrete? 
\item Are real forms of $V$, i.e., systems of homogeneous equations with real coefficients defining $V$,  finite up to isomorphisms over $\R$?
\end{enumerate}
\end{question}
The first negative answers to these questions are given by Lesieutre 
\cite{Le18}. He constructs a smooth complex  projective variety $V$ of dimension $6$ with Kodaira dimension $\kappa (V) = -\infty$ denying both (1) and (2). This variety is not rationally connected. 
Expanding his idea, Dinh and Oguiso (\cite{DO19}) construct a smooth complex projective variety $V$ of any dimension $\ge 2$ with $\kappa (V) \ge 0$ again denying both (1) and (2). In somewhat different directions, Dubouloz, Freudenburg, Moser-Jauslin construct smooth {\it affine} rational varieties for any dimension $\ge 4$ with infinitely many real forms (\cite{DFMJ21}). However, it is still completely open if there are counterexamples among smooth complex {\it projective rational} varieties, the most basic varieties in birational algebraic geometry.

The aim of this paper is to construct a smooth complex projective rational variety $V$ of any dimension $\ge 3$ denying both (1) and (2) (Theorem \ref{thm1} below). 

Before stating our main results, we recall precise definitions of crucial notions relevant to Question \ref{quest1} and our main results. 

\begin{definition}\label{def1}
\begin{enumerate} 
\item A variety of dimension $n$ is called rational if it is birational to the projective space $\P^n$ over the base field. 
\item An $\R$-scheme $W \to \Spec \R$ is called a real form of a $\C$-scheme $V \to \Spec \C$ if $W \times_{\Spec \R} \Spec \C \to \Spec \C$ is isomorphic to $V \to \Spec \C$ over $\Spec \C$. Two real forms $W_i \to \Spec \R$ ($i=1$, $2$) are isomorphic if they are isomorphic over $\Spec \R$. By abuse of language, we sometimes say that a $\C$-scheme $V$ is defined over $\R$ when a real form $W$ of $V$ is understood from the context. (See \cite{Se02} and \cite[Sect.2-4]{CF20} for more details about real forms.)
\item Let $V \to \Spec \C$ be a complex projective variety. Then the automorphism group 
$$\Aut (V) := \Aut (V/\Spec \C)$$ 
of $V$ over $\Spec \C$ has a natural locally algebraic group structure with at most countably many connected components, via the Hilbert scheme of $V \times V$. We denote by $\Aut_0(V)$ the identity component of $\Aut (V)$. It is of dimension $\dim H^0(V, T_V)$ when $V$ is smooth. Here, $T_V$ denotes the tangent bundle of $V$. So it is natural to ask if the group $\Aut (V)/\Aut_0(V)$ is always finitely generated or not. We say that $\Aut(V)$ is discrete if $\Aut_0 (V) = \{\id_V\}$. (See eg. \cite{Br18}, \cite{Br19} for more details.)
\item We denote by $\kappa (V)$ the Kodaira dimension of a smooth complex projective variety $V$. Then $\kappa (V) \in \{-\infty, 0, 1, \ldots, n-1, n\}$, where $n = \dim V$. The Kodaira dimension is a birational invariant in the sense that $\kappa (V) = \kappa (V')$ if $V$ and $V'$ are smooth birational projective varieties. (See eg. \cite{Ue75} for more details.)
\end{enumerate}
\end{definition}

The following is our main theorem: 

\begin{theorem}\label{thm1}
\begin{enumerate}
\item For each integer $n \ge 3$, there is a smooth complex projective rational variety $V$ of dimension $n$, with discrete, not finitely generated $\Aut (V)$, and with infinitely many mutually non-isomorphic real forms. 

\item Let $V$ be a smooth complex projective variety of dimension $n \ge 3$. If $\Aut (V)/\Aut_0(V)$ is not finitely generated, then $\kappa (V) \in \{-\infty, 0, 1, \ldots, n-2 \}$. 

\item Conversely, for each pair of integers $n \ge 3$ and $\kappa \in \{-\infty, 0, 1, \ldots, n-2 \}$, there is a smooth complex projective variety $V$ of dimension $n$ and of Kodaira dimension $\kappa$, with discrete, not finitely generated $\Aut (V)$, and with infinitely many mutually non-isomorphic real forms. 
\end{enumerate}
\end{theorem}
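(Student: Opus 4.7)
The plan is to concentrate on Part (1), which is the main novelty, and then to derive (2) from the structure theory of high-Kodaira-dimension varieties and (3) by combining (1) with the constructions of \cite{DO19}. The hard step is (1), namely producing a rational projective variety whose discrete automorphism group is of infinite rank and whose Galois cohomology over $\R$ is also infinite. For Part (1), I would first build a smooth projective rational $3$-fold $V$ as an iterated blowup of a simple rational template such as $\P^3$, $(\P^1)^3$, or $\P^2 \times \P^1$ along a carefully chosen configuration of points and curves, arranging that (i) $V$ remains rational, and (ii) a collection of infinite-order birational self-maps of the template (of Cremona or pseudo-automorphism type) lifts to honest biregular automorphisms of $V$ that generate a discrete subgroup of $\Aut(V)$ which is not finitely generated; in practice, this would be certified by exhibiting an action on the N\'eron--Severi lattice of $V$ with infinitely many independent Coxeter-type reflections. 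Infinitude of real forms would then be read off from the Galois cohomology set $H^1(\Gal(\C/\R), \Aut(V))$ using an antiholomorphic involution on the template compatible with the blown-up configuration: the free part of $\Aut(V)$ supplies infinitely many non-conjugate cocycles. To pass from dimension $3$ to arbitrary $n \geq 4$, I would take $V \times \P^{n-3}$ and blow it up further along finitely many subvarieties of the form $\{\text{pt}\} \times \P^{n-3}$ to rigidify the second factor and kill its $\mathrm{PGL}_{n-2}$-automorphisms while preserving the automorphisms coming from $V$.

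For Part (2), the argument is via the Iitaka fibration and classical finiteness results. If $\kappa(V) = n$, then $V$ is of general type and $\Aut(V)$ is already finite. If $\kappa(V) = n-1$, the Iitaka fibration has $(n-1)$-dimensional base of maximal Kodaira dimension (by the $C_{n,1}$ case of the Iitaka conjecture, due to Viehweg) and general fiber an elliptic curve; $\Aut(V)$ acts on this fibration, the action on the base factors through a finite quotient because the base is of general type, and the residual action modulo $\Aut_0(V)$ (an abelian variety for $\kappa(V) \geq 0$ by Fujiki--Lieberman, absorbing the continuous translations along fibers) is finite. Hence $\Aut(V)/\Aut_0(V)$ is finite, in particular finitely generated, proving the contrapositive.

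For Part (3), the case $\kappa = -\infty$ is Part (1). For $\kappa \in \{0, 1, \ldots, n-2\}$, I would take $V = V_0 \times Y$, where $V_0$ is a low-dimensional example from \cite{DO19} with discrete non-finitely generated $\Aut(V_0)$, infinitely many real forms, and $\kappa(V_0) = 0$, and $Y$ is a smooth projective variety of complementary dimension with $H^0(Y, T_Y) = 0$ and $\kappa(Y) = \kappa$ (for instance, a product of curves of genus at least two and a rigid Calabi--Yau manifold of the remaining dimension). Then $\Aut_0(V) = \Aut_0(V_0) \times \Aut_0(Y) = \{\id\}$, so $\Aut(V)$ is discrete; $\Aut(V)$ contains $\Aut(V_0) \times \Aut(Y)$ and is therefore not finitely generated; Kodaira dimension is additive under products, giving $\kappa(V) = \kappa$; and the real forms of $V_0$ paired with a fixed real form of $Y$ supply infinitely many pairwise non-isomorphic real forms of $V$.
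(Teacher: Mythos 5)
Your sketch leaves the essential difficulty of Part (1) unresolved, and two of its concrete steps would fail. First, you never identify a mechanism that actually produces a \emph{non-finitely generated} discrete group: ``infinitely many independent Coxeter-type reflections'' on $\mathrm{NS}(V)$ does not certify non-finite generation (a group with infinitely many listed generators, or a reflection group on a finite-rank lattice, can perfectly well be finitely generated), and it is precisely the open Question \ref{quest2} whether any rational \emph{surface} admits such a group. The known mechanism, which the paper imports from Lesieutre's surface $S$, is of a different nature: the restriction of $\Aut(S,C,P_5)$ to a fixed $(-4)$-curve $C\simeq\P^1$ contains the maps $x\mapsto x+2^{-n}$, so a suitable subgroup surjects onto a non-finitely generated subgroup of $(\C,+)$; the whole construction ($S\times\P^{n-2}$, blow-up of $2(n-1)+1$ real points over $P_5$ whose images in $\P^{n-2}$ contain $n$ points in general position and are symmetric under an involution $\iota$, then one further point on the new exceptional $\P^{n-1}$) is engineered so that exactly the subgroup $G=\{\varphi:\ d(\varphi|_C)_{P_5}=\pm1\}$ survives with finite index. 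Second, your passage to higher dimension is wrong as stated: blowing up $V\times\P^{n-3}$ along centers of the form $\{\mathrm{pt}\}\times\P^{n-3}$ does not kill $\mathrm{PGL}_{n-2}(\C)$, since each such center is invariant under $\{\id\}\times\mathrm{PGL}_{n-2}(\C)$; the entire group lifts and the resulting $\Aut$ is not even discrete. One must blow up \emph{points} whose second coordinates contain enough points of $\P^{n-3}$ in general position (cf.\ Lemma \ref{lem33}). Finally, ``the free part of $\Aut(V)$ supplies infinitely many non-conjugate cocycles'' is not the criterion used or usable here: what is needed (Theorem \ref{thm4}) is a finite-index subgroup on which $\mathrm{Gal}(\C/\R)$ acts trivially and which has infinitely many conjugacy classes of \emph{involutions}.

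Parts (2) and (3) also contain genuine gaps. In (2), the claim that the fiberwise action modulo $\Aut_0(V)$ is \emph{finite} is false: a minimal properly elliptic surface with a section of infinite order (times a curve of general type, to reach $n\ge 3$) has $\kappa=n-1$, trivial $\Aut_0$, and infinitely many translation automorphisms. The correct statement is only finite \emph{generation}, and the paper obtains it by noting that a finite-index subgroup is abelian, that its image in ${\rm GL}({\rm NS}(V)/{\rm tors})\subset{\rm GL}(N,\Z)$ is finitely generated by Malcev's theorem on solvable subgroups of ${\rm GL}(N,\Z)$, and that the kernel is finite modulo $\Aut_0(V)$ by Fujiki--Lieberman; your argument as written does not close this gap (also, the base of the Iitaka fibration need not be of general type; finiteness of the action on the base comes from Theorem \ref{thm3}). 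In (3), ``$\Aut(V)$ contains $\Aut(V_0)\times\Aut(Y)$ and is therefore not finitely generated'' is a non sequitur: finitely generated groups contain non-finitely generated subgroups, and Theorem \ref{thm2} requires \emph{finite index}. You must prove an equality $\Aut(V)=\Aut(V_0)\times\Aut(Y)$ (via equivariance of the canonical or Albanese projection together with discreteness of one factor, as in Lemma \ref{lem31}) and finiteness of $\Aut(Y)$. Your choice of $Y$ also fails when $n-2-\kappa=1$, since the only curve with $\kappa=0$ is elliptic and carries vector fields (and infinitely many automorphisms); the paper circumvents this by blowing up $M\times T$, with $T$ an abelian variety, at finitely many points of $\{P\}\times A$ chosen so that $\Aut(T,A)$ is finite.
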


Our proof of Theorem \ref{thm1} (1) and (3) is explicit and is based on the surfaces constructed in \cite{Le18} and \cite{DO19}. As in \cite{Le18} and \cite{DO19}, the most crucial part of the construction is a realization of some non-finitely generated discrete subgroup $G$ of $\Aut (S)$ of some special surface $S$ as a finite index subgroup of the automorphism group $\Aut (V)$ of another variety $V$ via taking some products and suitable blowing-ups, so that $V$ keeps the group $G$ as automorphisms but kills almost all $\Aut (S) \setminus G$ and at the same time produces essentially no new automorphisms. This process is, in general, hardest for rational varieties compared with other varieties, especially because of the last requirement "{\it $V$ produces essentially no new automorphisms}" (cf. \cite[Page 198, Rem.4]{Le18}). 

We are primarily interested in smooth complex projective varieties. However, concerning the base field of the non-finite generation part of Theorem \ref{thm1}, it might be worth mentioning the following:
 
\begin{remark}\label{rem1} Let $p$ be a prime number and $k$ be an algebraically closed field containing the rational function field $\F_p(t)$. In the proof of Theorem \ref{thm1} (1), we will use a special rational surface $S$ defined over $\R$ constructed by Lesieutre \cite{Le18}. (See Section \ref{sect3}.) Replacing $S$ by a rational surface defined over $\F_p(t)$ in \cite[Page 203]{Le18}, we find that for each $n \ge 3$ and for each prime number $p \ge 3$, there is a smooth projective rational variety $V$ of dimension $n$ defined over $k$, with discrete, not finitely generated $\Aut (V)$. Indeed, the construction and proof of Section \ref{sect3} is valid if we replace both $\R$ and $\C$ by $k$. 
\end{remark} 

By Theorem \ref{thm1} and \cite{DO19}, the most major remaining open problem for Question \ref{quest1} is now the following:

\begin{question}\label{quest2}
\begin{enumerate}
\item Is there a smooth complex projective rational surface $V$ with discrete, not finitely generated $\Aut (V)$? 

\item Is there a smooth complex projective rational surface $V$ with infinitely many mutually non-isomorphic real forms? 

\end{enumerate}
\end{question}

Unfortunately, our method is not available to answer Question \ref{quest2}. See \cite{Ben16}, \cite{Ben17} for some constraint from complex dynamics.

As in \cite{Le18} and \cite{DO19}, throughout this paper, we use the following three general facts frequently. See eg. \cite[Page 181, Cor.1]{Su82} for Theorem \ref{thm2}, \cite[Th.14.10]{Ue75} for Theorem \ref{thm3} and see \cite[Lem.13]{Le18} for Theorem \ref{thm4}. 

\begin{theorem}\label{thm2} Let $G$ be a group and let $H$ be a subgroup of $G$ such that $[G:H] <\infty$. Then $G$ is finitely generated if and only if $H$ is finitely generated. 
\end{theorem}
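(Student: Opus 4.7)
The plan is to prove the two implications separately. For the easy direction, suppose $H$ is finitely generated by $h_1,\ldots,h_m$, and let $T=\{g_1,\ldots,g_n\}$ be a system of left coset representatives of $H$ in $G$, a finite set by the hypothesis $[G:H]<\infty$. Every $g\in G$ decomposes uniquely as $g=g_i h$ with $h\in H$, and $h$ is a word in the $h_j$'s, so $T\cup\{h_1,\ldots,h_m\}$ is a finite generating set for $G$.

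The nontrivial direction---that a finite-index subgroup of a finitely generated group is itself finitely generated---I would handle via the classical Schreier (Nielsen--Schreier) rewriting argument. Suppose $G$ is generated by a finite symmetric set $S$, and choose a right transversal $T=\{t_1,\ldots,t_n\}$ for $H$ in $G$ with $t_1=e$, so that $G=\bigsqcup_i Ht_i$. Writing $\overline{g}\in T$ for the unique representative of the right coset $Hg$, I define for each $(t,s)\in T\times S$ the Schreier generator
\[
\gamma(t,s)\;=\; t s\, \overline{ts}^{\,-1},
\]
which lies in $H$ because $ts$ and $\overline{ts}$ represent the same coset. The collection $\{\gamma(t,s):t\in T,\,s\in S\}$ is finite, of size at most $n|S|$.

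To see that these Schreier generators exhaust $H$, I would run a telescoping word argument. Given $h\in H$, write $h=s_1 s_2\cdots s_r$ with $s_i\in S$, define $t_{(0)}=e$ and $t_{(i)}=\overline{t_{(i-1)}s_i}$, and use the identity $t_{(i-1)}s_i=\gamma(t_{(i-1)},s_i)\cdot t_{(i)}$ iteratively to rewrite
\[
h \;=\; \gamma(t_{(0)},s_1)\,\gamma(t_{(1)},s_2)\cdots\gamma(t_{(r-1)},s_r)\cdot t_{(r)}.
\]
Since $h\in H$ and each $\gamma$ lies in $H$, the residual factor $t_{(r)}$ is forced into $T\cap H=\{e\}$, and the rewriting expresses $h$ as a product of Schreier generators. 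The argument has no genuine obstacle beyond bookkeeping cosets under right multiplication; the only minor subtlety---handling inverse letters $s^{-1}$ in the word---is disposed of either by taking $S$ symmetric at the outset, or by the identity $\gamma(t,s^{-1})=\gamma(\overline{ts^{-1}},s)^{-1}$, which reduces the inverse case to the original finite set.
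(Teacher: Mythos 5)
Your argument is correct and complete: the easy direction via coset representatives is fine, and the hard direction is the standard Schreier rewriting lemma, with the telescoping identity $t_{(i-1)}s_i=\gamma(t_{(i-1)},s_i)\,t_{(i)}$ correctly forcing $t_{(r)}=e$ since $t_{(r)}\in T$ represents the coset $Hh=H$. Note that the paper does not prove this statement at all --- it is quoted as a known fact with a citation to Suzuki's \emph{Group Theory I} (p.~181, Cor.~1) --- so there is no in-paper proof to compare against; your write-up supplies exactly the classical argument that the cited reference contains, and the one minor subtlety you flag (inverse letters) is indeed handled correctly either by taking $S$ symmetric or by the identity $\gamma(t,s^{-1})=\gamma(\overline{ts^{-1}},s)^{-1}$.
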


\begin{theorem}\label{thm3} Let $V$ be a smooth complex projective variety and let ${\rm Bir}\, (V)$ be the group of the birational automorphisms of $V$. Then $|{\rm Im}\, (\rho_m)| < \infty$ for all $m \ge 0$, where $\rho_m$ is the (contravariant) group homomorphism
$$\rho_m : {\rm Bir}\, (V) \to {\rm GL}\, (H^0(V, mK_V))\,\, ;\,\, f \mapsto f^*.$$  
\end{theorem}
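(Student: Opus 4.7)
My plan is to combine a $\text{Bir}(V)$-invariant Finsler gauge on $H^0(V, mK_V)$, which forces $\text{Im}(\rho_m)$ into a compact subset of $GL(H^0(V, mK_V))$, with algebraic rigidity coming from the action on the pluricanonical image, which converts compactness to finiteness.

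First dispose of the trivial cases: if $m = 0$ then $H^0(V, \sO_V) \cong \C$ and birational pullback fixes constants, so $|\text{Im}(\rho_0)| = 1$; and if $H^0(V, mK_V) = 0$ (automatic for $m \ge 1$ when $\kappa(V) = -\infty$), the image is trivial. So I may assume $m \ge 1$ and $H^0(V, mK_V) \ne 0$.

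Construct the invariant gauge: in local holomorphic coordinates, for $\omega = \varphi\,(dz_1 \wedge \cdots \wedge dz_n)^{\otimes m}$, set
\[
|\omega|^{2/m} := |\varphi|^{2/m} \cdot (i/2)^n \, dz_1 \wedge d\bar z_1 \wedge \cdots \wedge dz_n \wedge d\bar z_n,
\]
a non-negative $(n,n)$-form that patches to a positive Borel measure on $V$. Then $N(\omega) := \int_V |\omega|^{2/m}$ is finite, continuous, positive on $\omega \ne 0$, and $2/m$-homogeneous. For each $f \in \text{Bir}(V)$, the change-of-variables formula, applied on the Zariski-open locus where $f$ is a biholomorphism (whose complement has Lebesgue measure zero), gives $N(f^*\omega) = N(\omega)$. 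Thus $\text{Im}(\rho_m)$ preserves the compact star-shaped body $\{N \le 1\}$ and so lies in a bounded subset of $GL(H^0(V, mK_V))$; its classical-topology closure is a compact Lie subgroup.

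To pass from compactness to finiteness, I would use that the pluricanonical map $\Phi_m : V \dashrightarrow \P(H^0(V, mK_V)^*)$ intertwines the action via $\Phi_m \circ f = \rho_m(f) \circ \Phi_m$, so $\text{Im}(\rho_m)$ preserves the projective variety $W_m := \overline{\Phi_m(V)}$ and sits inside the affine algebraic stabilizer $A_m \subset GL(H^0(V, mK_V))$. After replacing $m$ by a divisible multiple $m'$ if necessary --- the symmetric-power map $\text{Sym}^k : GL(H^0(V, mK_V)) \to GL(H^0(V, kmK_V))$ has finite kernel, so finiteness at $km$ implies it at $m$ --- one can arrange that $\Phi_{m'}$ realizes the Iitaka fibration, so that (a resolution of) $W_{m'}$ is of general type and therefore has finite birational self-group by Matsumura's classical theorem. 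The induced action of $\text{Bir}(V)$ on $W_{m'}$ then factors through this finite group, so $\text{Im}(\rho_{m'})$ is finite modulo scalars. The main obstacle, and the crux of \cite[Th.\,14.10]{Ue75}, is the scalar part: $N$-invariance already forces any scalar $\lambda \cdot I \in \text{Im}(\rho_{m'})$ to satisfy $|\lambda| = 1$, and the compatibility of $\rho_{m'}$ and $\rho_{km'}$ under the multiplication maps $H^0(V, m'K_V)^{\otimes k} \to H^0(V, km'K_V)$ pins $\lambda$ down to a root of unity of bounded order, giving $|\text{Im}(\rho_m)| < \infty$ as required.
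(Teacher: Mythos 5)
First, note that the paper does not prove this statement: it is quoted as a known theorem with the reference [Ue75, Th.\ 14.10], so there is no internal proof to compare yours with, and I am judging your argument on its own. Your first half is correct and is exactly the standard opening move: the pseudo-norm $N(\omega)=\int_V|\omega|^{2/m}$ is well defined, finite and ${\rm Bir}(V)$-invariant (a birational self-map is an isomorphism off sets of measure zero), so ${\rm Im}(\rho_m)$ is a bounded subgroup of ${\rm GL}(H^0(V,mK_V))$ with compact closure. The reduction from level $m$ to a divisible level $km$ via the multiplication maps is also sound (the kernel of the induced map on images consists of scalar $k$-th roots of unity).

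The second half has two genuine gaps. First, the pluricanonical image $W_{m'}$ (the Iitaka base) is \emph{not} of general type in general --- for an elliptic surface $V\to\P^1$ with $\kappa(V)=1$, the image $W_{m'}$ is $\P^1$ --- so Matsumura's theorem does not apply, and you cannot conclude this way that the induced action on $W_{m'}$ is finite; finiteness of that action is essentially equivalent to the theorem being proved and requires the extra log/moduli structure on the base, not just $W_{m'}$ itself. Second, and more fundamentally, compactness does not convert to finiteness by the route you sketch: knowing that every eigenvalue $\lambda$ of every $\rho_m(f)$ satisfies $|\lambda|=1$, and that $\lambda^k$ occurs again at level $km$, does not make $\lambda$ a root of unity (the unit circle is full of unimodular numbers of infinite multiplicative order, and even $(3+4i)/5$ is algebraic with all conjugates unimodular). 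The actual content of [Ue75, Th.\ 14.10] (Deligne's argument) is arithmetic: one shows the eigenvalues are algebraic \emph{integers} all of whose Galois conjugates are unimodular --- by running the same norm argument on every conjugate variety $V^\sigma$ for $\sigma\in{\rm Aut}(\C)$, together with an integrality step --- so that Kronecker's theorem forces them to be roots of unity of bounded degree, and one then concludes with a Burnside/Jordan--Schur argument. None of this appears in your sketch; the sentence claiming that the compatibility of $\rho_{m'}$ and $\rho_{km'}$ ``pins $\lambda$ down to a root of unity'' asserts precisely the point that needs proof.
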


Throughout this paper, we denote by $c$ the complex conjugate map. Then $c$ is the generator of the Galois group ${\rm Gal}\,(\C/\R)$ and ${\rm Gal}\,(\C/\R) = \{\id, c\}$.
 
\begin{theorem}\label{thm4} Let $V$ be a smooth projective complex variety defined over $\R$. Suppose that there is a finite index subgroup $G$ of $\Aut (V)$ 
such that ${\rm Gal}\,(\C/\R) = \{\id, c\}$ acts on $G$ as identity via $g \mapsto c \circ g \circ c$ and $G$ has infinitely many conjugacy classes of involutions. Then $V$ has infinitely many  mutually non-isomorphic real forms.
\end{theorem}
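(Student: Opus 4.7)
My plan is to leverage the classical Galois-descent correspondence (see e.g.\ \cite{DIK00}, \cite{CF20}) between $\R$-isomorphism classes of real forms of $V$ and $\Aut(V)$-conjugacy classes of antiholomorphic involutions $V \to V$. Concretely, I would fix the antiholomorphic involution $\sigma_0 \colon V \to V$ coming from the assumed real structure, so that the Galois action on $\Aut(V)$ reads $c(h) = \sigma_0 \circ h \circ \sigma_0^{-1}$ and, by hypothesis, $c(\tau) = \tau$ for every $\tau \in G$. The goal then becomes producing infinitely many antiholomorphic involutions lying in distinct $\Aut(V)$-conjugacy classes.

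For each involution $\tau \in G$, I would set $\sigma_\tau := \tau \circ \sigma_0$. This is visibly antiholomorphic, and a short computation gives
\[\sigma_\tau^2 \;=\; \tau\,\sigma_0\,\tau\,\sigma_0 \;=\; \tau \cdot c(\tau) \;=\; \tau^2 \;=\; \id_V,\]
so $\sigma_\tau$ is an antiholomorphic involution. Using $c|_G = \id$ once more, one checks that $g\,\sigma_\tau\,g^{-1} = \sigma_{g\tau g^{-1}}$ for every $g \in G$. From this, the assignment $\tau \mapsto \sigma_\tau$ descends to an injection $\Phi$ from the set $\mathcal{I}(G)$ of $G$-conjugacy classes of involutions in $G$ into the set of $G$-conjugacy classes of antiholomorphic involutions of $V$; injectivity follows because $\sigma_{\tau_1} = g\,\sigma_{\tau_2}\,g^{-1}$ with $g \in G$ forces $\tau_1\sigma_0 = (g\tau_2 g^{-1})\sigma_0$, hence $\tau_1 = g\tau_2 g^{-1}$.

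To conclude, I would invoke the elementary group-theoretic fact that if a group $K$ acts on a set $Y$ and $H \leq K$ is a subgroup of finite index $m$, then every $K$-orbit in $Y$ is a union of at most $m$ $H$-orbits (standard double-coset counting). Taking $K = \Aut(V)$, $H = G$, $m := [\Aut(V) : G] < \infty$, and $Y$ the set of antiholomorphic involutions under conjugation, I deduce that the natural map from $G$-conjugacy classes to $\Aut(V)$-conjugacy classes is at most $m$-to-one. Composing with $\Phi$, the resulting map from $\mathcal{I}(G)$ to the set of real forms of $V$ is at most $m$-to-one; hence the infiniteness of $\mathcal{I}(G)$ produces infinitely many pairwise non-isomorphic real forms.

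The only conceptually nontrivial ingredient is the Galois-descent classification of real forms by $\Aut(V)$-conjugacy classes of antiholomorphic involutions; after that, the argument is a clean exercise in cosets. Within the paper's overall program, the genuinely hard work does not lie in this theorem but rather in its application: constructing a variety $V$ together with a finite-index subgroup $G \leq \Aut(V)$ on which complex conjugation acts trivially and which contains infinitely many conjugacy classes of involutions.
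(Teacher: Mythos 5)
Your argument is correct, and it is essentially the standard twisting/Galois-descent proof: the paper itself does not reprove this statement but simply cites \cite[Lem.13]{Le18}, whose proof proceeds exactly as you do --- sending an involution $\tau\in G$ to the antiholomorphic involution $\tau\circ\sigma_0$, using $H^1(\mathrm{Gal}(\C/\R),\Aut(V))$ to identify real forms with $\Aut(V)$-conjugacy classes of such involutions, and controlling the passage from $G$-conjugacy to $\Aut(V)$-conjugacy by the finite index. So your proposal matches the intended proof; no gaps.
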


For a complex projective variety $X$ and non-empty closed algebraic subsets $Y_i$ ($i \in I$) of $X$, we define
$${\rm Aut}\, (X, Y_i\,\, (i \in I)) := \big\{f \in {\rm Aut}\, (X)\, |\, f(Y_i) = Y_i\,\, (i \in I) \big\}.$$
This is a subgroup of ${\rm Aut}\, (X)$ and $f_{|Y_i} \in {\rm Aut}\, (Y_i)$ ($i \in I$) if $f \in {\rm Aut}\, (X, Y_i\,\, (i \in I))$. 
For simplicity, we denote the group ${\rm Aut}\, (X, \{P\})$ by ${\rm Aut}\, (X, P)$ if $P$ is a closed point of $X$. 

Whenever we consider a complex variety V with a natural real form (which will be understood by the construction in our case), we denote it by $V_{\R}$. By abuse of notation, we denote by the set of real points $V_{\R}(\R)$ of $V_{\R}$ simply by $V(\R)$ and regard it as a subset of the set of closed points of $V$. More precisely, if $v$ is a closed point of $V$, i.e., if $v \in V(\C)$, then we denote $v \in V(\R)$ exactly when $c(v) = v$ under the complex conjugate map $c$ of $V$ with respect to the real form $V_{\R}$. 

\medskip\noindent
{\bf Acknowledgements.} We would like to express our thanks to Professor Shigeru Mukai for inspiring discussions, especially, asking us a construction of Lesieutre, which brought us new idea for our construction in this paper. We would like to express our thanks to Professor Michel Brion for his very careful reading and many valuable comments. This work has been started during the first and third named authors' stay at the University of Tokyo in December 2019. We would like to express our thanks to the University of Tokyo for hospitalities.

\section{Lesieutre's surface}\label{sect2}
In this section, we recall from \cite{Le18} the core rational surface, which we call Lesieutre's surface. Lesieutre's surface will play a crucial role in our proof of Theorem \ref{thm1} (1). 

Let $L_i'$ ($0 \le i \le 5$) be six lines defined over $\R$ in $\P^2$ such that the intersection points $P_{ij} := L_i' \cap L_j'$ ($0 \le i \not= j \le 5$) are mutually distinct and the points $P_{ij}$, $P_{kl}$, $P_{mn}$ are not colinear for any partition 
$$\{0, 1, 2, 3, 4, 5\} = \{i, j\} \cup \{k, l\} \cup \{m, n\}.$$
We choose such six lines so that
$$P_{10} = 0\,\, ,\,\, P_{20} = 1\,\, ,\,\,P_{30} = 2\,\, ,\,\, P_{40} = 3\,\, ,\,\, P_{50} = \infty$$
under a fixed affine coordinate $x$ of $L_0' = \P^1$. 
Let $S \to \P^2$ be the blow-up of $\P^2$ at the 15 points $P_{ij}$. 

We denote by $E_{ij} \subset S$ the exceptional curve over $P_{ij}$ and by $L_i \subset S$ the proper transform of $L_i'$. Set 
$$C := L_0\,\, ,\,\, P_{i} := C \cap E_{i0}\,\, (1 \le i \le 5).$$ 
Note that $L_i = \P^1$ and $(L_i, L_i) = -4$. Under the identification $C = L_0'$ via $S \to \P^2$, we may use the same affine coordinate $x$ for $C = \P^1$ as $L_0'$. Then
$$P_{1} = 0\,\, ,\,\, P_{2} = 1\,\, ,\,\,P_{3} = 2\,\, ,\,\, P_{4} = 3\,\, ,\,\, P_{5} = \infty$$
with respect to the coordinate $x$. 

\begin{definition}\label{def21}
We call this surface $S$ Lesieutre's surface. By construction, $S$ is defined over $\R$, i.e., 
$$S = S_{\R} \times_{\Spec \R} \Spec \C\,\, ,$$ 
where $S_{\R}$ is the blow  up of 
$\P_{\R}^2 := {\rm Proj}\, \R[x_0, x_1, x_2]$ 
at the $\R$-rational points $P_{ij} \in \P_{\R}^2(\R)$.  In order to distinguish with other real forms, we call this $S_{\R}$ the natural real form of $S$.  
\end{definition}
 By definition, Lesieutre's surface is a smooth projective rational surface defined over $\R$. 
\begin{proposition}\label{prop21} Let $S$ be Lesieutre's surface. Then:
\begin{enumerate}
\item $|-2K_S| = \{\sum_{i=0}^{5} L_i\}$.
\item $\Aut (S)$ is discrete. More strongly, the contravariant group homomorphism
$$\Aut (S) \to \Aut ({\rm Pic}(S)) = \Aut ({\rm NS}(S))\,\, ;\,\, f \mapsto f^*$$ is injective.
\item $\Aut (S, P_5) = \Aut(S, C, P_5)$. 
\item Every element of $\Aut (S)$ is defined over $\R$ with respect to the natural real form $S_{\R}$. In particular, the Galois group ${\rm Gal}(\C/\R)$ acts on $\Aut (S)$ as identity.
\end{enumerate}
\end{proposition}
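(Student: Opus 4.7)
My plan is to establish the four assertions in sequence; (1) and (2) form the technical backbone, and (3)--(4) follow as structured consequences.

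For (1), the first step is a Picard-group computation. Writing $H$ for the pullback of the hyperplane class, $K_S = -3H + \sum_{0 \le i < j \le 5} E_{ij}$, and since each line $L_i'$ passes through exactly the five centers $P_{ij}$ with $j \neq i$, the proper transform satisfies $L_i = H - \sum_{j \neq i} E_{ij}$. Summing gives $\sum_{i=0}^{5} L_i = 6H - 2\sum_{i<j} E_{ij} = -2K_S$. For uniqueness of the effective member of $|-2K_S|$, I would compute $L_i^2 = -4$ and $L_i \cdot L_j = 0$ for $i \neq j$ (the two proper transforms meet only over the blown-up point $P_{ij}$), so $L_i \cdot (-2K_S) = -4$. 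Any effective $D \in |-2K_S|$ must therefore contain each irreducible $L_i$ as a component; hence $D - \sum L_i$ is effective and linearly trivial, so equals zero.

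For (2), to prove injectivity of $\Aut(S) \to \Aut(\Pic(S))$, I would take $f$ with $f^* = \id$ and observe that $f$ preserves each class $[E_{ij}]$. Since the $(-1)$-curve $E_{ij}$ is the unique effective representative, $f$ preserves it setwise; contracting all the $E_{ij}$ yields a descended automorphism $\bar f$ of $\P^2$ fixing the fifteen points $P_{ij}$. The genericity hypothesis on the $P_{ij}$ forces $\bar f = \id$: indeed $\bar f$ fixes five points on each line $L_i'$, so acts as the identity on each $L_i'$, and being the identity on two transverse lines suffices to force $\bar f = \id$ on all of $\P^2$. Hence $f = \id$. Discreteness of $\Aut(S)$ is then automatic because $\Aut(\Pic(S)) \subset {\rm GL}({\rm NS}(S))$ is a discrete group.

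For (3), by (1) any $f \in \Aut(S)$ permutes the irreducible components of the unique divisor in $|-2K_S|$, namely $L_0, \ldots, L_5$. The key observation is that only $C = L_0$ passes through $P_5 = L_0 \cap E_{50}$: the curve $L_5$ meets $E_{50}$ at a different point (corresponding to the tangent direction of $L_5'$ at $P_{50}$), and for $i \in \{1, 2, 3, 4\}$ the proper transforms $L_i$ and $L_0$ are disjoint on $S$. Thus if $f(P_5) = P_5$, then $f(C)$ is some $L_j$ passing through $P_5$, forcing $f(C) = C$.

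For (4), the hyperplane class $H$ and each $E_{ij}$ are represented by divisors defined over $\R$ (the centers $P_{ij}$ being $\R$-rational), so complex conjugation $c$ acts as the identity on $\Pic(S)$. For any $f \in \Aut(S)$ one has $(c f c)^* = c^* f^* c^* = f^*$ on $\Pic(S)$, and the injectivity from (2) yields $c f c = f$; equivalently, $f$ commutes with $c$ and is defined over $\R$, with trivial Galois action on $\Aut(S)$. The only step that requires genuine attention is the general-position verification in (2), but the "mutually distinct nodes with no partition-collinearity" hypothesis on the $P_{ij}$ is arranged exactly to make this argument go through.
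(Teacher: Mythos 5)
Your argument is correct, and for parts (1), (3) and (4) it is essentially the paper's proof with the details filled in: (1) is the paper's ``adjunction plus $(L_i,L_i)=-4$'' argument made explicit (the computation $\sum L_i = 6H - 2\sum E_{ij} = -2K_S$ together with $L_i\cdot(-2K_S)=-4<0$ forcing every $L_i$ into any effective member); (3) is exactly the paper's observation that $C=L_0$ is the unique component of the invariant divisor $\sum L_i$ through $P_5$, for which your verification that $L_5$ meets $E_{50}$ at a point different from $P_5$ and that $L_1,\dots,L_4$ miss $E_{50}$ entirely is the right justification; and (4) is the paper's argument verbatim ($c$ acts trivially on ${\rm Pic}(S)$ because $H$ and the $E_{ij}$ are defined over $\R$, then $(cfc)^*=f^*$ and injectivity from (2) gives $cfc=f$). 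The one genuine difference is in (2): the paper outsources the injectivity of $f\mapsto f^*$ to Lesieutre's Theorem 3(1), whereas you prove it from scratch --- each $[E_{ij}]$ has $E_{ij}$ as its unique effective representative, so an $f$ acting trivially on ${\rm Pic}(S)$ descends to a projectivity $\bar f$ of $\P^2$ fixing all fifteen $P_{ij}$; since $\bar f$ fixes five distinct points on each $L_i'$ it is the identity on each line, and being the identity on two distinct lines (which contain four points in general position) forces $\bar f=\id$. This is a complete and standard argument, and it has the mild advantage of making the proposition self-contained and of exhibiting exactly which genericity is used (only the distinctness of the $P_{ij}$, not the non-collinearity condition on partitions); the paper's citation buys brevity and consistency with the source of the construction. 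Your closing remark that discreteness follows from injectivity into the discrete group $\Aut({\rm NS}(S))$ is also the intended deduction.
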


\begin{proof} The assertion (1) follows from the adjunction formula and $(L_i, L_i) = -4 <0$. The assertion (2) is proved by \cite[Thm.3 (1)]{Le18}. Note that $\Aut(S)$ preserves the divisor $\sum_{i=0}^{5} L_i$ by (1). Then the assertion (3) is clear, because $C = L_0$ is the unique irreducible component of $\sum_{i=0}^{5} L_i$ containing $P_5$. The first part of the assertion (4) is already explained. The second assertion of (4) is proved in the course of proof of 
\cite[Lem.19]{Le18}. We shall reproduce the proof here for the convenience of the readers. Since the curves $E_{ij}$ and $L_i$ are defined over $\R$ and their classes generate ${\rm Pic} (S) = {\rm NS} (S)$, it follows that ${\rm Gal}(\C/\R)$ acts on ${\rm Pic}\, (S)$ as identity. Thus ${\rm Gal}(\C/\R)$ acts on $\Aut (S)$ as identity by (2). Note that the representation in (2) is equivariant under the Galois action. 
\end{proof}

By Proposition \ref{prop21} (3), we have a representation
$$r_C : \Aut(S, P_5) = \Aut (S, C, P_5) \to \Aut(C, P_5) = \{f(x) = a + bx\,|\, a \in \C, b \in \C^{\times}\}\,\, ;\,\, g \mapsto g_{|C}\,\, .$$
Note that $\{f(x) = a \pm x\,|\, a \in \C\})$ 
is a subgroup of $\Aut (C, P_5)$. 

\begin{definition}\label{def22}
$$G := r_C^{-1}(\{f(x) = a \pm x\,|\, a \in \C\}) = \{\varphi \in \Aut (S, C, P_5)\, |\, d(\varphi_{|C})_{P_5} = \pm 1\}\,\, .$$
Here $d(\varphi_{|C})_{P_5}$ is the differential map of $\varphi_{|C} : C \to C$ at $P_5$. 
\end{definition}
The group $G$ is the same group as $G^{\pm}$ in \cite[Page 204]{Le18}. Note that $r_C(G)$ is much smaller than the group $\{f(x) = a \pm x\,|\, a \in \C\}$. 

\begin{proposition}\label{prop22} The group $G$ satisfies:
\begin{enumerate}
\item ${\rm Im}(r_C)$ (resp. $r_C(G)$) contains the following elements $f_1$, $f_2$, $f_3$ (resp. $f_1, f_3$):
$$f_1(x) = x+1\,\, ,\,\, f_2(x) = 2x\,\, ,\,\, f_3(x) = -x\,\,.$$  
\item $G$ is not finitely generated.
\item $G$ has infinitely many conjugacy classes of involutions. 
\end{enumerate}
\end{proposition}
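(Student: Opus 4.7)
The plan is to prove parts (1), (2), (3) of Proposition~\ref{prop22} in that order.

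For (1), I would exhibit three explicit automorphisms $F_1, F_2, F_3 \in \Aut(S, P_5)$ whose restrictions to $C = L_0$ realize $f_1$, $f_2$, $f_3$. Such automorphisms arise from Cremona-type transformations of $\P^2$ that permute the six lines $L_i'$ (and hence permute the fifteen points $P_{ij}$ among themselves), and which therefore lift to biregular automorphisms of $S$ since every $P_{ij}$ has been blown up. Under the identification $C = L_0'$ and in the chosen affine coordinate $x$, these lifts restrict to the advertised affine maps. A derivative check at $P_5 = \infty$ in the chart $y = 1/x$ gives derivatives $1$, $1/2$, $-1$ respectively, showing $f_1, f_3 \in r_C(G)$ and $f_2 \in {\rm Im}(r_C) \setminus r_C(G)$. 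This construction is essentially contained in \cite{Le18}, and I would cite rather than reconstruct it.

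For (2), since a homomorphic image of a finitely generated group is finitely generated, it suffices to prove that $r_C(G)$ is not finitely generated. Let $F_1, F_2, F_3$ be the lifts provided by (1); only $F_1$ and $F_3$ lie in $G$. The key trick is that although $F_2 \notin G$, the conjugate $F_2^n F_1 F_2^{-n}$ does lie in $G$ for every $n \in \Z$, because its derivative at $P_5$ equals $2^n \cdot 1 \cdot 2^{-n} = 1$. A direct computation gives
\[
r_C(F_2^n F_1 F_2^{-n})(x) = x + 2^n,
\]
so $r_C(G)$ contains every translation by a power of two, hence by every element of the additive group $\Z[1/2]$. Since $(\Z[1/2], +)$ is not finitely generated, neither is $r_C(G)$, and therefore neither is $G$.

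For (3), one must produce infinitely many involutions in $G$ that are pairwise non-conjugate in $G$. Any non-identity involution $\iota \in G$ restricts to an affine involution of $C$ of the form $x \mapsto -x + c$, with fixed point $c/2$ on $C$. If $h \in G$ conjugates $\iota$ to $\iota'$, then $r_C(h)$ carries the fixed point of $\iota|_C$ to that of $\iota'|_C$, so $G$-conjugacy imposes an orbit relation under $r_C(G)$ on these fixed points. Combined with the finer invariant coming from the action on $\Pic(S) = {\rm NS}(S)$ (available thanks to the injectivity in Proposition~\ref{prop21}(2))---in particular, which curves among the $E_{ij}$ and $L_i$ are preserved, and how the fixed part of the involution decomposes inside this Lorentzian lattice---this produces enough separating data. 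The hardest step is that a family of the form $\tau J_0 \tau^{-1}$ with $\tau \in G$ and a fixed base involution $J_0$ would collapse into a single class by construction, so one must exhibit genuinely different involutions (for instance, reflections in distinct $G$-orbits of root-like vectors of $\Pic(S)$) and then verify that the lattice-theoretic invariants separate them into infinitely many $G$-orbits; this is the analogue for $G$ of the conjugacy analysis of \cite{Le18}, whose relevant lemmas I would import directly.
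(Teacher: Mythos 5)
Your overall route coincides with the paper's: part (1) is delegated to Lesieutre's explicit Cremona-type automorphisms, part (2) uses the conjugation trick $f_2^{\mp n}\circ f_1\circ f_2^{\pm n}$ to produce the dyadic translations, and part (3) is delegated to Lesieutre's conjugacy analysis. There is, however, one genuine logical gap in your part (2). You conclude that $r_C(G)$ is not finitely generated \emph{because it contains} the non-finitely generated subgroup $(\Z[1/2],+)$. That inference is false for general groups: a finitely generated group can perfectly well contain non-finitely generated subgroups (infinite-index subgroups of free groups, for instance). What makes the argument work here is that $r_C(G)$ sits inside $\{x\mapsto a\pm x\}$, so its translation part $r_C(G)\cap\{x\mapsto a+x\}$ is an \emph{abelian} subgroup of index at most $2$. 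The paper makes this explicit: it passes to the index-two subgroup $G^{+}=r_C^{-1}(\{x\mapsto a+x\})$ (so by Theorem \ref{thm2} it suffices to show $G^{+}$ is not finitely generated), observes that $r_C(G^{+})$ is abelian, and only then invokes the fact that a subgroup of a finitely generated \emph{abelian} group is finitely generated to get a contradiction from $\Z[1/2]\le r_C(G^{+})$. You should insert this reduction; without it the key step is unjustified.

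Two smaller remarks. In (1), the paper does not find an automorphism restricting to $-x$ directly in \cite{Le18}: it extracts $f_1(x)=x+1$, $f_2(x)=2x$ and $f_4(x)=3-x$ from there and obtains $f_3=f_4\circ f_1^{3}$; your plan to "cite rather than reconstruct" an $F_3$ realizing $-x$ should be adjusted accordingly. In (3), your sketch (fixed points on $C$ plus lattice-theoretic invariants in $\Pic(S)$) describes roughly what Lesieutre proves, but the clean way to organize it --- and what the paper actually does --- is to pass to the finite-index subgroup $G_{\rm ev}\le G$ of elements preserving $L_0$, $L_5$, $L_1\cup L_4$ and $L_2\cup L_3$, quote \cite[Cor.18]{Le18} that $G_{\rm ev}$ has infinitely many conjugacy classes of involutions, and then use that a finite-index subgroup with infinitely many involution classes forces the ambient group to have infinitely many as well (each $G$-class meets $G_{\rm ev}$ in at most $[G:G_{\rm ev}]$ many $G_{\rm ev}$-classes). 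Your version, which attempts to separate classes directly inside $G$, would require redoing rather than importing Lesieutre's lemmas.
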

\begin{proof} The fact that $f_1, f_2 \in {\rm Im}(r_C)$ follows from \cite[Lem.4]{Le18}. Set $f_4(x) = 3-x$. Then $f_4 \in r_C(G)$ by \cite[2nd paragraph, Page 204]{Le18}. Since $f_4 \circ f_1^{3} (x) = -x$, it follows that $f_3 \in r_C(G)$. This proves the assertion (1). 

We show the assertion (2). The group
$$G^{+} := r_C^{-1}(\{f(x) = a + x\,|\, a \in \C\})$$
is a subgroup of index two of $G$. So, by Theorem \ref{thm2}, it suffices to show that $G^{+}$ is not finitely generated. 

Observe that $r_C(G^{+})$ is an abelian group, as it is a subgroup of the abelian group 
$$\{f(z) = z+a\,|\, a \in \C\} \simeq (\C, +).$$ 
and 
$$f_2^{-n} \circ f_1 \circ f_2^{n} (x) = x + \frac{1}{2^n} \cdot$$
Hence $r_C(G^{+})$ has a subgroup
$$\langle f_2^{-n} \circ f_1 \circ f_2^{n}\, |\, n \in \Z \rangle \simeq \langle \frac{1}{2^n}\, |, n \in \Z \rangle .$$
This subgroup is not finitely generated. Thus the abelian group $r_C(G^{+})$ is not finitely generated, either. Hence $G^{+}$ is not finitely generated. 

We show the assertion (3). As in \cite[Page 204]{Le18}, we consider the subgroup $G_{{\rm ev}}$ of $G$ defined by
$$\{f \in G\,|\, f(L_0) = L_0, f(L_5) = L_5, f(L_1\cup L_4) = L_1 \cup L_4, f(L_2 \cup L_3) = L_2 \cup L_3\}\,\, .$$
Since $f(\sum_{i=0}^{5} L_i) = \sum_{i=0}^{5} L_i$ for any $f \in \Aut (S)$ by Proposition \ref{prop21} (1), $G_{{\rm ev}}$ is a finite index normal subgroup of $G$. On the other hand, it is shown by \cite[Cor.18]{Le18} that $G_{{\rm ev}}$ contains infinitely many conjugacy classes of involutions. Then $G_{{\rm ev}}$ has infinitely many classes of involutions under the conjugate action of $G$ on $G_{{\rm ev}}$, as $G_{{\rm ev}}$ is a finite index normal subgroup of $G$. Hence $G$ has infinitely many conjugacy classes of involutions as well.
\end{proof}

\begin{definition}\label{def23}
Let $S$ be Lesieutre's surface. We choose and fix $\tau_S \in G$ such that $r_C(\tau_S) = f_3$, that is, $r_C(\tau_S)(x) = -x$ on $C = \P^1$. 
\end{definition}

\section{Proof of Theorem \ref{thm1} (1)}\label{sect3} 

We shall prove Theorem \ref{thm1} (1).

Construction \ref{const31} and Proposition \ref{prop31} below will complete the proof of Theorem \ref{thm1} (1). 

We employ the same notations for Lesieutre's surface as in Section \ref{sect2}.

In the rest, the following elementary lemmas will be used frequently.

\begin{lemma}\label{lem31} Let $Y$ and $Z$ be complex projective varieties and let $G$ be a subgroup of $\Aut(Y \times Z)$. Assume that $\Aut (Y)$ is discrete and the projection $Y \times Z \to Z$ is equivariant with respect to $G$. Then $G \subset \Aut(Y) \times \Aut(Z)$. 
\end{lemma}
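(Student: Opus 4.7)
The plan is to show that every $\varphi\in G$ splits as a product $(\alpha,\beta)$ with $\alpha\in\Aut(Y)$ and $\beta\in\Aut(Z)$ by exploiting the rigidity coming from $\Aut(Y)$ being $0$-dimensional together with the irreducibility of $Z$.

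First I would use the $G$-equivariance of the second projection $p_Z\colon Y\times Z\to Z$: for each $\varphi\in G$ there is a unique $\beta\in\Aut(Z)$ with $p_Z\circ\varphi=\beta\circ p_Z$, so $\varphi$ preserves the fibers of $p_Z$. Writing $\varphi(y,z)=(F(y,z),\beta(z))$ for some morphism $F\colon Y\times Z\to Y$, the restriction $F_z:=F(\cdot,z)\colon Y\to Y$ is an automorphism for every $z\in Z$ (its inverse is the corresponding restriction coming from $\varphi^{-1}$).

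Next I would recall that for a complex projective variety $Y$ the automorphism functor is representable by a group scheme $\underline{\Aut}(Y)$ locally of finite type over $\C$ with at most countably many connected components, whose identity component has dimension $\dim H^0(Y,T_Y)$; this is exactly the structure recalled in Definition 1.2\,(3) of the excerpt. The family $\{F_z\}_{z\in Z}$ is a morphism $Z\to\underline{\Aut}(Y)$ of schemes. Since $\Aut(Y)$ is discrete, i.e. $\Aut_0(Y)=\{\id_Y\}$, every connected component of $\underline{\Aut}(Y)$ is a reduced point, so the underlying topological space of $\underline{\Aut}(Y)$ is a countable totally disconnected set. As $Z$ is an irreducible (hence connected) complex projective variety, any morphism from $Z$ to such a space is constant. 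Therefore $F_z=\alpha$ is independent of $z$, and we conclude $\varphi(y,z)=(\alpha(y),\beta(z))$, i.e.\ $\varphi\in\Aut(Y)\times\Aut(Z)$.

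The only delicate point is the representability of the automorphism functor and the fact that a morphism from a connected variety to a scheme with only $0$-dimensional components is constant; both are standard and already invoked in the definitions at the start of the paper, so no additional ingredient is needed. The irreducibility of $Z$ is implicit in the convention that a \emph{variety} is irreducible (as used throughout the paper, e.g.\ in the definition of rationality). If one wishes to avoid the automorphism scheme altogether, the same conclusion can be reached by fixing finitely many points $y_1,\dots,y_N\in Y$ whose images under any automorphism determine it (for instance via a very ample embedding and the action on the Hilbert polynomial data), and observing that each morphism $z\mapsto F(y_i,z)\in Y$ together with the constraint that $F_z\in\Aut(Y)$ forces, by the discreteness of $\Aut(Y)$, the assignment $z\mapsto F_z$ to factor through a discrete set.
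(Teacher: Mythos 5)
Your proof is correct and follows essentially the same route as the paper: use the equivariance of the projection to write $\varphi(y,z)=(F(y,z),\beta(z))$, observe that $z\mapsto F_z$ is a morphism from the connected variety $Z$ to the discrete group $\Aut(Y)$, and conclude it is constant. The paper states this in three lines; your version merely spells out the representability of the automorphism functor that justifies the word ``morphism.''
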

\begin{proof} Let $f \in G$. By the second assumption, $f$ is of the form 
$$f(y, z) = (f_z(y), f_Z(z)),$$
where $f_Z \in \Aut (Z)$ and $f_z \in \Aut (Y)$. Then we have the morphism
$$Z \to \Aut(Y)\,\, ;\,\, z \mapsto f_z.$$
Since $\Aut (Y)$ is discrete by the first assumption, it follows that $f_z$ does not depend on $z \in Z$. Hence $f = (f_Y, f_Z)$ for some $f_Y \in \Aut(Y)$. This implies the result. 
\end{proof}

\begin{lemma}\label{lem32} Let $f:\P^m\to X$ be a morphism where $X$ is a projective variety of dimension $< m$. Then $f$ is constant.
\end{lemma}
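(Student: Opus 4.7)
The plan is to argue by contradiction, combining the theorem on fiber dimensions with the fact that $\Pic(\P^m) = \Z$. Suppose $f$ is non-constant; then the image $Y := f(\P^m)$ is an irreducible projective variety of some dimension $d$ with $1 \le d \le \dim X < m$. Applying the dimension-of-fibers theorem to the proper surjective morphism $f : \P^m \to Y$, some fiber $f^{-1}(y)$ has dimension at least $m - d \ge 1$, so I can pick an irreducible curve $C \subset f^{-1}(y) \subset \P^m$. By construction $f(C) = \{y\}$ is a single point.

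I then choose a very ample divisor $H$ on $X$. Since $\Pic(\P^m) = \Z$ is generated by the hyperplane class $H_0$, I can write $f^{*}\sO_X(H) \cong \sO_{\P^m}(k)$ for some $k \in \Z$; global generation of $\sO_X(H)$ pulls back to global generation of $f^{*}\sO_X(H)$, forcing $k \ge 0$. By the projection formula, $(f^{*}H) \cdot C = H \cdot f_{*}C = 0$ since $f(C)$ is a point, while on the other hand $(f^{*}H) \cdot C = k (H_0 \cdot C) = k \deg C$ with $\deg C > 0$. Hence $k = 0$ and $f^{*}\sO_X(H)$ is the trivial bundle.

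To conclude, I use that the embedding $X \hookrightarrow \P^N$ determined by $|H|$ comes from sections $s_0, \ldots, s_N \in H^0(X, \sO_X(H))$ with no common zero. Their pullbacks $f^{*}s_i$ lie in $H^0(\P^m, \sO_{\P^m}) = \C$, so they are constants $c_0, \ldots, c_N$, not all zero, and the composition $\P^m \xrightarrow{f} X \hookrightarrow \P^N$ is the constant map with value $[c_0 : \cdots : c_N]$. Therefore $f$ itself is constant, contradicting our assumption. The only non-bookkeeping step is producing the contracted curve $C$ inside a positive-dimensional fiber, which relies on the standard dimension-of-fibers theorem applied to the dimension inequality $\dim f(\P^m) < m$; I do not anticipate any real obstacle, as the remaining steps are routine manipulations of $\Pic(\P^m) = \Z$ and the projection formula.
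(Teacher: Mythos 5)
Your proof is correct and takes essentially the same route as the paper: both arguments reduce to showing that the pullback $f^*H$ of a very ample divisor on $X$ is trivial in $\Pic(\P^m)\cong\Z$ and then conclude that $f$ composed with the embedding by $|H|$ is constant. The only difference is how the triviality is obtained --- the paper observes directly that $(f^*H)^m=0$ since the image has dimension $<m$ while $f^*H$ is effective, whereas you produce a contracted curve via the fiber-dimension theorem and intersect $f^*H$ with it; both are immediate computations and your extra step introduces no gap.
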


\begin{proof} Let $H$ be a very ample divisor on $X$. Since $(f^*H)^m =0$ and $f^*H$ is effective, it follows that $f^*H = 0$ in ${\rm Pic}\, (\P^m) \simeq \Z$. This implies the result. 
\end{proof}

\begin{lemma}\label{lem33} Let $A$ be a finite subset of $\P^m$ containing $m+2$ points in general position in the sense that no $m+1$ points among these $m+2$ points are contained in a hyperplane of $\P^m$. Then $\Aut(\P^m,A)$ is finite.
\end{lemma}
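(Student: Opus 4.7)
The plan is to exploit the standard fact that $\Aut(\P^m) = \mathrm{PGL}(m+1)$ acts simply transitively on ordered $(m+2)$-tuples of points in general position, and then use the finiteness of $A$ to conclude.

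First I would let $B \subset A$ be a distinguished subset of $m+2$ points in general position, whose existence is given by hypothesis. Since every element of $\Aut(\P^m, A)$ preserves the finite set $A$, restriction to $A$ defines a group homomorphism
\[
\rho : \Aut(\P^m, A) \longrightarrow \mathrm{Sym}(A),
\]
where $\mathrm{Sym}(A)$ is the (finite) permutation group of $A$. The codomain being finite, it suffices to show that $\rho$ is injective, i.e.\ that any $f \in \Aut(\P^m)$ fixing every point of $A$ is the identity.

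Any such $f$ fixes, in particular, every point of $B$. Here I would invoke the classical fact that an element of $\mathrm{PGL}(m+1)$ that fixes $m+2$ points in general linear position must be the identity: writing the $m+2$ points as $[e_0], \ldots, [e_m], [e_0+\cdots+e_m]$ for a suitable basis (which is possible precisely by the general position assumption), the condition that $f$ fixes the first $m+1$ forces a representative matrix of $f$ to be diagonal, and the condition that $f$ fixes the last point forces all diagonal entries to coincide, so $f = \id_{\P^m}$. Hence $\ker \rho$ is trivial and $\Aut(\P^m, A)$ embeds into $\mathrm{Sym}(A)$, which finishes the proof.

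There is no real obstacle here; the only minor point to record carefully is the general position condition, which is used exactly once—to realize $B$ as a projective frame. Everything else is a direct count.
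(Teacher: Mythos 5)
Your proof is correct and follows essentially the same route as the paper: both reduce to the finite-index (here: kernel of the permutation representation on $A$) subgroup fixing each point of a general-position subset, and both show that subgroup is trivial by linear algebra on a representing matrix of size $m+1$. Your projective-frame normalization is in fact spelled out slightly more completely than the paper's terse eigenvector count, but it is the same underlying argument.
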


\begin{proof} 
The group $G$ of all automorphisms which fix each point of $A$ is a finite-index subgroup of  $\Aut(\P^m,A)$. It is enough to show that $G$ is trivial. 
This is true because if $f$ is an automorphism then it is given by a square matrix of size $m+1$. It has at most $m+1$ linearly independent eigenvectors.
\end{proof}

The following generalization has its own interest. We will also apply it for abelian varieies in Section \ref{sect5}.

\begin{lemma}\label{lem34} 
Let $X$ be any compact K\"ahler manifold of dimension $n$. There is a number $N$ such that if $A$ is a finite subset of $X$ containing $N$ points in general position, then $\Aut(X,A)$ is finite. In particular, when all morphisms $\P^{n-1}\to X$ are constant (e.g. $X$ is a complex torus), then $\Aut(\widehat X)$ is finite, where $\widehat X$ is the blow-up of $X$ at the points in $A$.
\end{lemma}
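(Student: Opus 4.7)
\medskip\noindent
\textbf{Proof plan.} Let $d := \dim_{\C} \Aut_0(X) = \dim H^0(X, T_X)$ and take any integer $N > d$. The plan is to interpret ``general position'' for $A = \{p_1, \ldots, p_N\}$ as avoidance of countably many proper closed analytic subsets of $X^N$, splitting the argument into an infinitesimal part (killing the identity component) and a component-group part (ruling out every non-identity coset of $\Aut_0(X)$ in $\Aut(X)$). For the infinitesimal part, I would consider the diagonal action of $\Aut_0(X)$ on $X^N$: for $A$ outside a proper Zariski-closed subset of $X^N$, the evaluation map $H^0(X, T_X) \to \bigoplus_{i=1}^N T_{p_i}X$ is injective, so the Lie algebra of $\Aut_0(X) \cap \Aut(X, A)$ vanishes and this closed subgroup of $\Aut_0(X)$ is zero-dimensional. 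By Fujiki's structure theorem, the identity component $\Aut_0(X)$ of a compact K\"ahler manifold is an extension of a connected linear algebraic group by a compact complex torus, and closed zero-dimensional subgroups of either are finite; hence $\Aut_0(X) \cap \Aut(X, A)$ is finite.

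For the component-group part, $\Aut(X)/\Aut_0(X)$ is countable via the Hilbert scheme of $X \times X$ (cf.\ Definition \ref{def1}(3)). For each non-identity coset $[f] = f\Aut_0(X)$, set
\[ Z_{[f]} := \bigl\{ (p_1, \ldots, p_N) \in X^N : \exists\, g \in f\Aut_0(X) \text{ with } g(p_i) = p_i \text{ for all } i \bigr\}. \]
Since every $g \in f\Aut_0(X)$ is non-trivial, its fixed locus $\mathrm{Fix}(g)$ is a proper closed analytic subset of $X$ of dimension at most $n-1$; fibering the incidence set $\Phi_{[f]} := \{(g, (p_i)) : g(p_i) = p_i \text{ for all } i\}$ over $f\Aut_0(X)$ with fibers $\mathrm{Fix}(g)^N$ gives $\dim \Phi_{[f]} \leq d + (n-1)N < nN$, the last inequality using $N > d$. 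A compact exhaustion of $f\Aut_0(X)$ together with Remmert's proper mapping theorem then writes $Z_{[f]}$ as a countable union of proper closed analytic subsets of $X^N$, and the further countable union $\bigcup_{[f] \ne [\id]} Z_{[f]}$ still has dense complement. Any $A$ in this complement (and outside the bad locus from the first paragraph) satisfies $\Aut(X, A) \subseteq \Aut_0(X)$, so $\Aut(X, A)$ is finite by the first step.

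For the ``in particular'' statement, let $f \in \Aut(\widehat X)$ and let $E_i \cong \P^{n-1}$ be an exceptional divisor of $\pi : \widehat X \to X$. The composite $E_i \to f(E_i) \to \pi(f(E_i)) \subseteq X$ is a morphism $\P^{n-1} \to X$, which by hypothesis is constant, so $f(E_i)$ lies in a fiber of $\pi$ and must equal some $E_j$. Hence $f$ permutes the exceptional divisors, descends to an automorphism of $X$ preserving $A$ setwise, and the descent map $\Aut(\widehat X) \hookrightarrow \Aut(X, \{A\})$ is injective by analytic continuation; since $\Aut(X, \{A\}) / \Aut(X, A) \hookrightarrow S_N$, finiteness of $\Aut(X, A)$ implies finiteness of $\Aut(\widehat X)$. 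The main obstacle is establishing the bound on each $Z_{[f]}$ rigorously: because $f\Aut_0(X)$ is in general neither compact nor algebraic, the passage from the dimension estimate on $\Phi_{[f]}$ to a genuine analytic subset of $X^N$ containing $Z_{[f]}$ requires Fujiki's structure theorem together with a careful compact exhaustion argument, which is more delicate than the analogous dimension count available over an algebraic group.
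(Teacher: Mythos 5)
Your overall strategy (interpret ``general position'' as avoidance of countably many proper analytic subsets of $X^N$, handle the infinitesimal part via injectivity of the evaluation map on vector fields, and handle the rest by a fixed-locus dimension count) is close in spirit to the paper's proof, which instead runs a single induction on $\dim \Aut(X,P_1,\ldots,P_k)$ and then adds one more general point. However, there is a genuine gap in your treatment of the identity component. You conclude that $\Aut_0(X)\cap\Aut(X,A)$ is finite from the assertion that closed zero-dimensional subgroups of a connected linear algebraic group are finite. That assertion is false in the analytic topology: $\Z\subset(\C,+)=\G_a(\C)$, or the infinite cyclic group generated by $z\mapsto 2z$ inside ${\rm PGL}_2(\C)$, are closed, discrete, infinite subgroups of connected linear algebraic groups. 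Vanishing of the Lie algebra only gives that $\Aut_0(X)\cap\Aut(X,A)$ is discrete, hence countable; to upgrade this to finiteness you would need to know that this stabilizer is an \emph{algebraic} (Zariski-closed) subgroup, which is clear when $X$ is projective but requires further input from Fujiki--Lieberman theory for a general compact K\"ahler $X$, and you have not supplied it. Since your component-group argument explicitly excludes the identity coset (because ${\rm Fix}(\id)=X$ is not proper), nothing in your proof rules out an infinite discrete pointwise stabilizer inside $\Aut_0(X)$.

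The gap is easy to close, and the repair is precisely the paper's key move: once the stabilizer is discrete it is countable, so the union of the fixed loci of its non-trivial elements is a countable union of proper analytic subsets of $X$, and one further general point outside this union makes the pointwise stabilizer trivial. Equivalently, you could run your $Z_{[f]}$ construction also over $\Aut_0(X)\setminus\{\id\}$ rather than only over the non-identity cosets --- every $g\neq\id$ has ${\rm Fix}(g)$ proper, so the same dimension count $d+(n-1)N<nN$ applies. With that amendment your argument goes through; note, though, that the paper's single induction avoids the Remmert/compact-exhaustion delicacies you flag at the end, since it only ever takes countable unions of fixed loci of elements of an already-discrete (hence countable) group. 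Your argument for the ``in particular'' statement agrees with the paper's and is fine.
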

\begin{proof}
The second assertion is a consequence of the first one because for such an $X$ we have 
$$\Aut(\widehat X) = \Aut(\widehat X, E_A) = \Aut(X,A),$$
where $E_A$ is the set of exceptional divisors of $\widehat X \to X$.  

The first assertion is a consequence of Fujiki-Lieberman's theorem (\cite[Thm.4.8]{Fu78}, \cite{Li78}). Indeed, since $\Aut(X)$ is a complex Lie group of finite dimension and $\Aut_0(X)$ is associated to holomorphic vector fields of $X$, if $P_1\in X$ is a general point, then $\Aut(X,P_1)$ has dimension smaller than the one of $\Aut(X)$. By induction, there exists $N$ such that for general $P_1,\ldots,P_{N-1}$, the group $\Aut(X, P_1,\ldots,P_{N-1})$ is discrete. It follows that the set of points which are fixed by some non-trivial element of this group is a countable union of proper analytic subsets of $X$. Choose $P_N \in X$ outside this set. Then we have that 
$\Aut(X,P_1,\ldots,P_{N-1}, P_{N})$ is finite. Hence so is $\Aut(X,\{P_1,\ldots,P_{N-1}, P_{N}\})$, because 
$$[\Aut(X,\{P_1,\ldots,P_{N-1}, P_{N}\}) : \Aut(X,P_1,\ldots,P_{N-1}, P_{N})] \le N!.$$ 
\end{proof}

\begin{construction}\label{const31}

Let $n \ge 3$ and set $m = n-2$. Let $\P_{\R}^{m} = {\rm Proj}\, \R[x_0, \ldots, x_m]$ be the projective space defined over $\R$ and regard $\P_{\R}^{m}$ as the natural real form of the complex projective space $\P^m$. We fix the affine coordinates $z_i = x_i/x_0$ and denote by $R_0 :=(0)_{i=1}^{m} \in \P^{m}(\R)$ the origin with respect to the affine coordinates $(z_i)_{i=1}^{m}$. (See the end of Introduction for the precise meaning of $\P^m(\R)$.) Let $\iota$ be the involution of $\P^{m}$ defined by $\iota((z_i)_{i=1}^{m}) = (-z_i)_{i=1}^{m}$. Note that $\iota$ is defined over $\R$. 

Let us choose a finite set 
$$R := \{R_0, R_1, \ldots, R_{2(m+1)}\} = \{R_0, R_1, \ldots, R_{m+1}\,\, ,\,\, \iota(R_1)\,\, ,\,\, \ldots \,\, ,\,\, \iota(R_{m+1})\} \subset \P^{m}(\R)$$
such that $R_i$ ($0 \le i \le m+1$) are in general position in the sense that no $m+1$ points of them are contained in a hyperplane of $\P^m$. 
Then $R$ is invariant under $\iota$ and $\iota \in \Aut(\P^m, R_0, \{R_i\}_{i=1}^{2(m+1)})$. 
Let 
$$X_0 := S \times \P^{m},$$
where $S$ is Lesieutre's surface. Then $X_0$ is a smooth projective variety of dimension $n = m+2$ defined over $\R$ with the natural real form $X_{0, \R} = S_{\R} \times \P_{\R}^m$. 

We will use the same notations of the points and curves on $S$ as in Section \ref{sect2}. 

Let 
$$\pi_1 : X_1 \to X_0$$
be the blow-up of $X_0$ at the points in $\{P_5\} \times R \subset X_{0}(\R)$. (Once again, see the end of Introduction for the precise meaning of $X_0(\R)$.) We denote by $T_{(P_5,R_0)}X_0$ the tangent space of $X_0$ at $(P_5,R_0)$. Denote also by
$$E_0 = \P(T_{(P_5, R_0)}X_0) = \P^{m+1} \subset X_1$$ 
the exceptional divisor corresponding to the point $(P_5, R_0) \in X_0$ and by $E_i$ ($1 \le i \le 2(m+1)$) the remaining $2(m+1)$ exceptional divisors. Then $X_1$ and $E_0$ are defined over $\R$ with natural real forms $X_{1, \R}$ and $E_{0, \R}$ .  We choose 
$$0 \not= v \in T_{P_5}C \subset T_{P_5}S\,\, ,\,\, 0 \not= w \in  T_{R_0}\P^{m}$$ such that the point $(v, w) \in T_{(P_5, R_0)}X_0$ defines the point $$[(v, w)] \in E_0(\R) \subset X_1(\R).$$ Let 
$$\pi_2 : X_2\to X_1$$
be the blow-up at the point $[(v,w)]$ in $X_1(\R)$. Then $X_2$ is defined over $\R$ with a natural real form $X_{2, \R}$ induced by $X_{1, \R}$. We denote the exceptional divisor of $\pi_2$ by $F$.   
\end{construction}

\begin{proposition}\label{prop31} 
Let $X_2$ be as in Construction \ref{const31}. Then:
\begin{enumerate}
\item $X_{2}$ is a smooth complex projective rational variety of dimension $n = m+2 \ge 3$ defined over $\R$. 
\item $\Aut (X_{2})$ is discrete and not finitely generated.
\item $X_{2}$ has infinitely many mutually non-isomorphic real forms.
\end{enumerate}
\end{proposition}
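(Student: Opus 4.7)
Part (1) is immediate from Construction \ref{const31}: $X_0=S\times\P^m$ is a smooth projective rational variety of dimension $n=m+2$ defined over $\R$, and blowing up along real smooth closed subschemes preserves each of these properties. For (2) and (3) the plan is to embed $\Aut (X_2)$ into $\Aut (X_0)=\Aut (S)\times\mathrm{PGL}_{m+1}$, and then to exhibit an isomorphic copy $\tilde G$ of the group $G$ of Definition \ref{def22} as a finite-index subgroup of $\Aut (X_2)$ on which the Galois action is trivial, so that Proposition \ref{prop22} combined with Theorems \ref{thm2} and \ref{thm4} concludes. The identification $\Aut (X_0)=\Aut (S)\times\mathrm{PGL}_{m+1}$ comes from the facts that $\Aut (S)$ is discrete (Proposition \ref{prop21}(2)), so $H^0(S,T_S)=0$ and by K\"unneth $H^0(X_0,T_{X_0})=\mathfrak{pgl}_{m+1}$, whence $\Aut_0(X_0)=\{\id_S\}\times\mathrm{PGL}_{m+1}$; as this is normal and its orbits are the fibres of $p_S\colon X_0\to S$, every $f\in\Aut (X_0)$ descends to some $f_S\in\Aut (S)$, and the remaining datum is a morphism $S\to\mathrm{PGL}_{m+1}$ from the projective $S$ to the affine $\mathrm{PGL}_{m+1}$, hence constant.

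Next, any $f\in\Aut (X_2)$ permutes the exceptional divisors $\tilde E_0,E_1,\ldots,E_{2(m+1)},F$ of $\pi\colon X_2\to X_0$. Among these, only $\tilde E_0$ and $F$ meet, and they are distinguished by geometric type ($F\cong\P^{m+1}$ whereas $\tilde E_0\cong\mathrm{Bl}_{\mathrm{pt}}\P^{m+1}$), so $f$ preserves each of $\tilde E_0$ and $F$, and permutes $\{E_1,\ldots,E_{2(m+1)}\}$. Hence $f$ descends through $X_1$ to $\bar f\in\Aut (X_0)$ with $\bar f(P_5,R_0)=(P_5,R_0)$, $\bar f(\{P_5\}\times R)=\{P_5\}\times R$, and $d\bar f_{(P_5,R_0)}$ preserves the line $[(v,w)]\subset T_{(P_5,R_0)}X_0$; the descent $\Aut (X_2)\to\Aut (X_0)$ is injective because an automorphism of $X_2$ acting as the identity on the dense open complement of the exceptional locus is itself the identity. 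Writing $\bar f=(g,\phi)$, we obtain $g\in\Aut (S,P_5)=\Aut (S,C,P_5)$ (Proposition \ref{prop21}(3)), $\phi\in\Aut (\P^m,R_0,R)$ finite by (the proof of) Lemma \ref{lem33}, and $dg_{P_5}(v)=\lambda v$, $d\phi_{R_0}(w)=\lambda w$ for a common $\lambda\in\C^\times$. Let $A$ denote the subgroup of $\Aut (X_0)$ of such $(g,\phi)$, and $A^\circ:=\ker\bigl(A\to\Aut (\P^m)\bigr)$; then $[A:A^\circ]<\infty$ and a direct check using $\lambda=1$ identifies $A^\circ\cong G^+=r_C^{-1}(\{a+x:a\in\C\})$, which is not finitely generated by the argument in the proof of Proposition \ref{prop22}(2). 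Thus $\Aut (X_2)\cong A$ is not finitely generated by Theorem \ref{thm2}; it is also discrete because $\Aut_0(X_2)\subset\Aut_0(X_0)=\mathrm{PGL}_{m+1}$ must further stabilise the $2(m+1)+1$ points of $R$ in general position, which by Lemma \ref{lem33} gives a finite group, necessarily trivial.

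For (3), define a homomorphism $\epsilon\colon G\to\{\id,\iota\}$ by $\epsilon(g)=\id$ or $\iota$ according as $r_C(g)(x)=a+x$ or $a-x$. One checks $(g,\epsilon(g))\in A$ for every $g\in G$, using the $\iota$-invariance of $R$ and that $(dg_{P_5}\oplus d\epsilon(g)_{R_0})(v,w)=\pm(v,w)$. The resulting subgroup $\tilde G:=\{(g,\epsilon(g)):g\in G\}\cong G$ contains $A^\circ$ with index $2$, hence has finite index in $A=\Aut (X_2)$. Every element of $\tilde G$ is defined over $\R$ (Proposition \ref{prop21}(4) for the $S$-factor, and $\id,\iota$ are real on the $\P^m$-factor), so the Galois action on $\tilde G$ is trivial; and $\tilde G\cong G$ has infinitely many conjugacy classes of involutions by Proposition \ref{prop22}(3). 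Theorem \ref{thm4} then yields infinitely many mutually non-isomorphic real forms of $X_2$.

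The step I expect to be hardest is proving the descent $\Aut (X_2)\hookrightarrow\Aut (X_0)$ together with the tangent-line constraint---this is precisely where the two blow-up loci $\{P_5\}\times R$ and $[(v,w)]$ in Construction \ref{const31} pay off, forcing any automorphism to fix $(P_5,R_0)$ and the distinguished direction $[(v,w)]$, thereby killing the otherwise abundant automorphisms of $X_0$ while leaving exactly the non-finitely-generated group $G$ (up to finite index).
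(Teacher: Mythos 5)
Your overall strategy is the same as the paper's: split $\Aut(X_0)=\Aut(S)\times\Aut(\P^m)$, descend automorphisms of $X_2$ through the two blow-ups into $\Aut(S,C,P_5)\times\Aut(\P^m,R_0,\{R_i\})$ with the eigenvalue-matching condition at $(v,w)$, and exhibit $\{(g,\iota^{\epsilon(g)}):g\in G\}\cong G$ as a Galois-trivial finite-index subgroup; your $\tilde G$ is literally the paper's $H$. One genuinely different (and correct) sub-step: you prove the splitting of $\Aut(X_0)$ via $\Aut_0(X_0)=\{\id_S\}\times\mathrm{PGL}_{m+1}$, normality, and the constancy of morphisms from the projective $S$ to the affine $\mathrm{PGL}_{m+1}$, whereas the paper observes that $\Phi_{|-2K_{X_0}|}$ is the second projection (using $|-2K_S|=\{\sum L_i\}$), so that $p_2$ is $\Aut(X_0)$-equivariant and Lemma \ref{lem31} applies. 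Your route avoids the anticanonical computation at the cost of invoking Fujiki--Lieberman-type facts; both work.

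The genuine gap is the unproved assertion that every $f\in\Aut(X_2)$ permutes the exceptional divisors $\tilde E_0,E_1,\dots,E_{2(m+1)},F$. This is the heart of the construction, not a routine fact: blow-ups in general admit automorphisms that move the exceptional locus (e.g.\ the degree-six del Pezzo surface, where the rotation of the hexagon of $(-1)$-curves sends exceptional curves of one presentation as $\mathrm{Bl}_{3\,\mathrm{pts}}\P^2$ to non-exceptional ones). The paper's Claim \ref{claim31} supplies exactly the missing rigidity: by Lemma \ref{lem32} and $\rho(S)\ge 2$ there is no non-constant morphism $\P^{m+1}\to X_0$, hence the image of any non-constant morphism $\P^{m+1}\to X_1$ (resp.\ $X_2$) is one of the $E_i$ (resp.\ one of $F,\pi_2^{-1}(E_1),\dots,\pi_2^{-1}(E_{2(m+1)})$, using $\rho(\mathrm{Bl}_{\mathrm{pt}}\P^{m+1})=2$ to exclude $\tilde E_0$ as a possible image); applying this to $f$ restricted to each exceptional $\P^{m+1}$ gives the permutation. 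Note, however, that even with this input your statement is stronger than what it yields: $\tilde E_0$ is \emph{not} a $\P^{m+1}$, so its membership in the permuted collection requires a separate argument, and your identification $f(F)=F$ (hence the descent to $X_1$, the fixing of $[(v,w)]$, and the claimed equality $\Aut(X_2)\cong A$) rests precisely on ``only $\tilde E_0$ and $F$ meet.'' The paper sidesteps this: it only needs that $\Aut(X_2)$ permutes the finite set $\{F,\pi_2^{-1}(E_i)\}$, whence the stabilizer $\Aut(X_2,F)=\Aut(X_1,[(v,w)])$ has finite index in $\Aut(X_2)$, which is all that Theorems \ref{thm2} and \ref{thm4} require. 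If you either prove $f(\tilde E_0)=\tilde E_0$ separately (e.g.\ by first pinning down $F$ via the finite-index reduction and then descending to $X_1$, where $E_0\cong\P^{m+1}$ is covered by the rigidity argument) or simply switch to the finite-index formulation, the remainder of your argument --- the computation of $A$, the identification $A^\circ\cong G^+$, discreteness, and part (3) --- is correct as written.
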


\begin{proof} Set $X = X_{2}$. We shall employ the same notation as in Construction \ref{const31}. 

The assertion (1) is clear by the construction. 

We show the assertions (2) and (3) by dividing the argument into several steps.

\begin{claim}\label{claim30}
$\Aut(X_0) = \Aut(S) \times \Aut(\P^m)$. 
\end{claim}

\begin{proof} Recall that $X_0 = S \times \P^{m}$ and $H^0(X_0, -2K_{X_0}) = H^0(S, -2K_S) \otimes H^0(\P^m, -2K_{\P^m})$ by the K\"unneth formula. Since the linear system $|-2K_S|$ consists of a single element by Proposition \ref{prop21} (1), while $-2K_{\P^{m}}$ is very ample, the anti-bicanonical map
$$\Phi_{|-2K_{X_0}|} : X_0 \to \P^m$$
coincides with the second projection $p_2 : X_0 \to \P^m$. 
Since the linear system $|-2K_{X_0}|$ is preserved by $\Aut (X_0)$, it follows that the second projection $p_2 : X_0 \to \P^m$ is $\Aut (X_0)$-equivariant. Since $\Aut(S)$ is discrete by Proposition \ref{prop21}, the result follows from Lemma \ref{lem31}. 
\end{proof}

\begin{claim}\label{claim31}
\begin{enumerate}
\item There is no non-constant morphism $\varphi : \P^{m+1} \to X_0$. 
\item Let $\varphi: \P^{m+1} \to X_1$ be a non-constant morphism. Then 
$\varphi(\P^{m+1})$ is one of the irreducible components of $\pi_1^{-1}(\{P_5\} \times R)$, i.e., one of the exceptional divisors $E_i$ ($0 \le i \le 2(m+1)$). 
\item Let $\varphi: \P^{m+1} \to X_2$ be a non-constant morphism. Then 
$\varphi(\P^{m+1})$ is one of the following divisors:
$$F\,\, ,\,\, \pi_2^{-1}(E_i) \simeq E_i\,\, (1 \le i \le 2(m+1)).$$  
\end{enumerate}
\end{claim}

\begin{proof} We show the assertion (1). Note that $m+1 \ge 2$. Since the Picard number $\rho (S) \ge 2$, there is no surjective morphism $\P^{m+1} \to S$ if $m+1 = 2$. Therefore there is no non-constant morphism $\P^{m+1} \to S$ or $\P^{m+1} \to \P^m$ by Lemma \ref{lem32}. Hence the morphism $p_i \circ \varphi$ is constant for the projections $p_i$ ($i = 1,  2$) from $X_0 = S \times \P^m$ to the $i$-th factor. Hence $\varphi$ is constant. 

Since $\pi_1 \circ \varphi$ is constant by (1), the assertion (2) follows. 

We show the assertion (3). Recall that the proper transform $E_0'$ of $E_0$ on $X_2$ is the blow-up of $E_0 = \P^{m+1}$ at the point $[(v,w)]$. Hence $E_0'$ is of Picard number $\rho(E_0') = 2$. Hence there is no surjective morphism 
$\P^{m+1} \to E_0'$. Therefore, by Lemma \ref{lem32}, $E_0'$ admits no non-constant morphism from $\P^{m+1}$. This together with the assertion (2) implies the assertion (3) exactly for the same reason as in the proof of (2).
\end{proof}

From now, we regard the subgroups of $\Aut (X)$ and $\Aut(X_1)$ as subgroups of ${\rm Bir}\, (X_0)$ via the birational morphisms $\pi_1$ and $\pi_2$. For instance, we say that $G_1 = G_2$ (resp. $G_1 \subset G_2$) for a subgroup $G_1 \subset \Aut (X)$ and a subgroup $G_2 \subset \Aut(X_1)$ if $G_1 =G_2$ (resp. $G_1 \subset G_2$) in ${\rm Bir}\, (X_0)$. We also identify $\Aut (X_0) = \Aut(S) \times \Aut(\P^m)$ by Claim \ref{claim30}. 

\begin{claim}\label{claim32} 
\begin{enumerate}
\item 
$\Aut(X_1) = \Aut(X_0, \{P_5\} \times R) = \Aut(S, C, P_5) \times \Aut(\P^m, R)$.
\item 
$\Aut(X_1, [(v,w)]) = \Aut(X_1, [(v,w)], E_0) \subset \Aut(S, C, P_5) \times \Aut(\P^m, R_0, \{R_i\}_{i=1}^{2(m+1)})$.
\end{enumerate}
\end{claim}

\begin{proof}
The assertion (1) follow from Claim \ref{claim31} (2). 
Since $[(v,w)]\in E_0$ and $[(v,w)]\notin E_i$ for $i \ge 1$, the assertion (2) follows from the assertion (1).
\end{proof}

From now, we use $\Aut(X_1, [(v,w)]) \subset \Aut(S, C, P_5) \times \Aut(\P^m, R_0, \{R_i\}_{i=1}^{2(m+1)})$ and denote an element of $\Aut(X_1, [(v,w)])$ as in the form:
$$(\varphi, g) \in \Aut(S, C, P_5) \times \Aut(\P^m, R_0, \{R_i\}_{i=1}^{2(m+1)}).$$ 
Let $\epsilon \in \{0, 1\}$. We define
$$H := \{(\varphi, \iota^{\epsilon}) \in \Aut(S, C, P_5) \times \Aut(\P^m, R_0, \{R_i\}_{i=1}^{2(m+1)})\,|\, d(\varphi_{|C})_{P_5}(v) = (-1)^{\epsilon} v\}.$$
Here $\iota$ is the involution defined in Construction \ref{const31} and $d(\varphi_{|C})_{P_5}$ is the differential map of $\varphi_{|C} : C \to C$ at $P_5$. By definition, the index $\epsilon$ in $(\varphi, \iota^{\epsilon}) \in H$ is uniquely determined by $\varphi$. 

\begin{claim}\label{claim33} $H$ is a finite index subgroup of $\Aut(X_1, [(v,w)])$ and $H =G$ under the identification $(\varphi, \iota^{\epsilon}) = \varphi$. Here $G$ is the group in Definition \ref{def22}.  
\end{claim}

\begin{proof} Let 
$$H' := \{(\varphi, g) \in \Aut(S, C, P_5) \times \Aut(\P^m, R_0, \{R_i\}_{i=1}^{2(m+1)})\, |\, (\varphi,g)(\C(v, w)) = \C(v, w)\}.$$
Here we recall that $\C(v, w)$ is the $1$-dimensional linear space in $T_{(P_5,R_0)}X_0$ spanned by $(v, w)$ and the action of $(\varphi, g)$ on $\C(v, w)$ is nothing but the differential map. Then, by Claim \ref{claim32}, we have
$$\Aut(X_1, [(v,w)]) = H'.$$
Since $\iota^{\epsilon} (w) = (-1)^{\epsilon} w$ and $w \not= 0$, the condition $(\varphi, \iota^{\epsilon})\C(v, w) = \C(v, w)$ is equivalent to the condition that $d(\varphi_{|C})_{P_5}(v) = (-1)^{\epsilon} v$. Then $H$ is a subgroup of $H'$. Since the $m+2$ points $\{R_0\} \cup \{R_i\}_{i=1}^{m+1} \subset \P^m$ are in general position, by Lemma \ref{lem33}, $\Aut(\P^m, \{R_0\}\cup\{R_i\}_{i=1}^{m+1})$ is a finite group. Hence so is  $\Aut(\P^m, R_0, \{R_i\}_{i=1}^{2(m+1)})$, because $$\Aut(\P^m, R_0, R_1,...,R_{2(m+1)})\subset \Aut(\P^m, R_0, \{R_i\}_{i=1}^{m+1})\subset \Aut(\P^m, \{R_0\}\cup\{R_i\}_{i=1}^{m+1})$$ and $$[\Aut(\P^m, R_0, \{R_i\}_{i=1}^{2(m+1)}):\Aut(\P^m, R_0, R_1,...,R_{2(m+1)})]<\infty.$$ In particular, the number of $g$'s in the definition of $H'$ is at most finite. Thus $[H' : H] < \infty$. 

The last assertion is clear by the definitions of $G$ and $H$ with the remark before Claim \ref{claim33}. This proves the claim. 
\end{proof}

\begin{claim}\label{claim34}
\begin{enumerate}
\item  
$\Aut(X_1, [(v,w)])$ is a finite index subgroup of $\Aut (X)$.
\item 
$H$ is a finite index subgroup of $\Aut (X)$.
\end{enumerate}
\end{claim}

\begin{proof} By Claim \ref{claim31} (3), we have
$$\Aut (X) = \Aut (X, \{\pi_2^{-1}(E_i), F\,|\, 1 \le i \le 2(m+1)\}).$$
On the other hand, by construction, $\Aut(X_1, [(v,w)]) = \Aut(X, F) \subset \Aut(X)$.
Since $\{\pi_2^{-1}(E_i), F\,|\, 1 \le i \le 2(m+1)\}$ is a finite family, this implies the assertion (1). The assertion (2) follows from (1) and Claim \ref{claim33}.  
\end{proof}

Now we are ready to complete the proof of Proposition \ref{prop31} (2), (3). 

By Claims \ref{claim33} and \ref{claim34} (2), $H \simeq G$ is a finite index subgroup of $\Aut (X)$. Since $G$ is not finitely generated by Proposition \ref{prop22} (2), $\Aut(X)$ is not finitely generated as well by Theorem \ref{thm2}. This proves Proposition \ref{prop31} (2).

By the construction, $X$ is defined over $\R$. By Proposition \ref{prop21} (4) and by the construction, the Galois group ${\rm Gal}(\C/\R)$ acts trivially on $H$. Since $G$ has infinitely many conjugacy classes of involutions by Proposition \ref{prop22} (3), the same holds for $H$ because $H \simeq G$. Since $H$ is a finite index subgroup of $\Aut (X)$, it follows from Theorem \ref{thm4} (\cite[Lem.13]{Le18}) that $X$ has infinitely many mutually non-isomorphic real forms. This proves Proposition \ref{prop31} (3).
\end{proof}

\section{Proof of Theorem \ref{thm1} (2)}\label{sect4}

In this section, we prove Theorem \ref{thm1} (2). Let $V$ be a smooth complex projective variety of dimension $n$. 

Consider the case where $\kappa (V) = n$. Then the pluricanonical map $\Phi_{|mK_V|}$ for large divisible $m$ is a birational map onto the image. Thus $\Aut (V)$ is a finite group by Theorem \ref{thm3}.

Next, consider the case where $\kappa (V) = n-1 \ge 1$. Then the geometric generic fibre $V_{\overline{\eta}}$ of the pluricanonical map 
$$\Phi_{|mK_V|} : V \dasharrow B$$ 
for large divisible $m$ is an elliptic curve defined over $\overline{\C(B)}$, an algebraic closure of the function field $\C(B)$. (See eg. \cite[Chap.IV, Sect.4]{Ha77} for basic properties of elliptic curves over an algebraically closed field, which we will use from now on.) By Theorem \ref{thm3}, there is a subgroup $G$ of $\Aut(V)$ such that 
$$[\Aut(V) : G] < \infty\,\, ,\,\, \Aut_0 (V) \subset G \subset \Aut(V_{\overline{\eta}}/\overline{\C(B)}).$$   
Set 
$$A := \Aut_0(V_{\overline{\eta}}/\overline{\C(B)}).$$ 
Since $V_{\overline{\eta}}$ is an elliptic curve over $\overline{\C(B)}$, 
the group $A$ is an abelian group consisting of translations and $\Aut(V_{\overline{\eta}}/\overline{\C(B)})$ is a semi-direct product of $A$ and some finite cyclic group $\Z/a\Z$. Thus $A$ is an abelian subgroup of $\Aut(V_{\overline{\eta}}/\overline{\C(B)})$ such that $[\Aut(V_{\overline{\eta}}/\overline{\C(B)}) : A] < \infty$.  
Hence 
$$A' := A \cap G$$
is also an abelian group and $[G:A'] <\infty$. Then $[\Aut (V) : A'] < \infty$ 
as well. Consider the contravariant group homomorphism 
$$\rho : \Aut(V) \to {\rm GL}({\rm NS}\,(V)/({\rm torsion}))\,\, ;\,\, f \mapsto f^*.$$
Since ${\rm GL}({\rm NS}\,(V)/({\rm torsion})) \simeq {\rm GL}\, (N, \Z)$ for some $N$ and $A'$ is an abelian group, the group $\rho(A')$ is isomorphic to a solvable subgroup of ${\rm GL}\, (N, \Z)$. In particular, $\rho(A')$ is  finitely generated by the famous theorem of Malcev (see eg. \cite[Chap.2]{Se83}).

Since $[\rho(\Aut(V)) : \rho(A')] < \infty$ by $[\Aut (V) : A'] < \infty$, the group $\rho(\Aut(V))$ is finitely generated as well by Theorem \ref{thm2}. Since $\rho(\Aut(V)) \simeq \Aut (V)/{\rm Ker}\, \rho$, the group $\Aut (V)/{\rm Ker}\, (\rho)$ is also finitely generated.  
Since $[{\rm Ker}\, \rho : \Aut_0(V)] < \infty$ by an algebraic version of Fujiki-Lieberman's theorem (see eg. \cite[Thm.2.10]{Br19}), $\Aut(V)/\Aut_0(V)$ is also finitely generated by the following exact sequence of groups:
$$1 \to {\rm Ker}\, \rho/\Aut_0(V) \to \Aut(V)/\Aut_0(V) \to \Aut (V)/{\rm Ker}\, \rho \to 1.$$

This proves Theorem \ref{thm1} (2).      

\section{Proof of Theorem \ref{thm1} (3)}\label{sect5}

We prove Theorem \ref{thm1} (3). 

Note that $\kappa (V) = \kappa (\P^n) = -\infty$ if $V$ is a smooth complex projective rational variety. Therefore, the case $\kappa (V) = -\infty$ follows from Theorem \ref{thm1} (1). 

From now, we consider the case where $\kappa (V) \ge 0$. For this, instead of Lesieutre's surface, we use the surface $S_2$ constructed by 
\cite[Sect.4]{DO19} 
to construct the desired varieties. 

In the rest, we denote $M := S_2$. The surface $M$ is constructed from a Kummer K3 surface of product type in \cite[Sect.4]{DO19}. Since, we will not use the explicit form of $M$, we omit to repeat the detailed construction and just surmarize basic properties of the surface $M$ we will use. See \cite[Sect.4]{DO19} for the explicit form of $M$. 

\begin{proposition}\label{prop51} There are a smooth projective complex surface $M$ defined over $\R$ with a natural real form $M_{\R}$ and a subgroup $H$ of $\Aut (M)$ such that
\begin{enumerate}
\item $M$ is birational to a smooth complex projective K3 surface, in particular, $\kappa (M) = 0$ and $\Aut (M)$ is discrete;
\item $[\Aut (M):H] < \infty$ and $H$ is not finitely generated. Moreover, there is a point $P \in M(\R)$ such that $h(P) = P$ for all $h \in H$;
\item $c \circ h \circ c = h$ for every element of $H$ under the Galois action of ${\rm Gal}\, (\C/\R) = \{\id, c\}$ with respect to the real form $M_{\R}$; 
and
\item $H$ has infinitely many conjugacy classes of involutions.
\end{enumerate}
\end{proposition}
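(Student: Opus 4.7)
The plan is to take $M$ to be the surface $S_2$ constructed in \cite[Sect.4]{DO19}, together with an appropriate finite-index subgroup of the group exhibited there. Since the assertion is essentially a recollection of properties already established in \emph{loc.~cit.}, the task is to verify that each listed condition follows either directly from the cited construction or from a short additional argument, possibly after passing to a further finite-index subgroup.

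For property (1), the surface $M = S_2$ is obtained from a Kummer K3 surface of product type $(E_1 \times E_2)/\langle -1 \rangle$ by a sequence of blow-ups at real points, where $E_1, E_2$ are elliptic curves defined over $\R$. Hence $M$ is birational to a K3 surface and $\kappa(M) = 0$. K3 surfaces satisfy $H^0(T) = 0$, and this vanishing is preserved under blow-ups of points (using the exact sequence $0 \to T_{\widetilde X} \to \pi^* T_X \to Q \to 0$), so $H^0(M, T_M) = 0$. Thus $\Aut_0(M) = \{\id\}$ and $\Aut(M)$ is discrete.

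For property (2), I would invoke the main output of \cite[Sect.4]{DO19}, which provides a non-finitely-generated finite-index subgroup $H_0 \subset \Aut(M)$ consisting essentially of lifts of translations by infinitely many independent elements on the elliptic factors. To secure the common fixed real point, take $P$ to be the image on $M$ of a distinguished $2$-torsion (hence real) point of the Kummer construction lying outside the blown-up locus; translations by elements fixing this point already preserve it, and if needed we replace $H_0$ by the finite-index subgroup $H := H_0 \cap \Aut(M, P)$. Non-finite generation is preserved by Theorem \ref{thm2}.

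For property (3), the argument parallels Proposition \ref{prop21}(4): since the Kummer construction, the blown-up centres, and the exceptional divisors are all defined over $\R$, the natural generators of $\Pic(M) = {\rm NS}(M)$ are $\R$-defined, so ${\rm Gal}(\C/\R)$ acts trivially on $\Pic(M)$. The representation $\Aut(M) \to \Aut(\Pic(M))$ is Galois-equivariant and has finite kernel (by Fujiki-Lieberman, since $\Aut(M)$ is discrete), so after possibly shrinking $H$ by a finite index we obtain $c \circ h \circ c = h$ for all $h \in H$. Property (4) is proved directly in \cite[Sect.4]{DO19}; passing to a finite-index subgroup preserves the infinitude of conjugacy classes of involutions, since each conjugacy class in $H_0$ splits into only finitely many classes in $H$. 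The main obstacle is to orchestrate these several finite-index reductions simultaneously while retaining properties (2) and (4), but this is routine since a finite intersection of finite-index subgroups of $\Aut(M)$ remains of finite index.
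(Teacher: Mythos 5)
Your overall route is the same as the paper's: take $M$ to be the surface $S_2$ of \cite[Sect.4]{DO19} and $H$ the group exhibited there, and read off properties (1)--(4) from the statements proved in \emph{loc.~cit.} (the paper cites \cite[Thm.2.8, Lem.4.5, Lem.4.6, Def.2.7]{DO19} for exactly these points, taking $P$ to be one of the two distinguished fixed points $P'$, $P''$). So the proposal is in substance correct, and your extra verification of discreteness via $H^0(M,T_M)=0$ is a harmless elaboration of what the paper dismisses as ``clearly satisfied.''

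Two of your auxiliary ``finite-index patches,'' however, would not work if they were actually needed, so you should not present them as safety nets. First, $H_0\cap\Aut(M,P)$ has finite index in $H_0$ only if the $H_0$-orbit of $P$ is finite, which is not automatic; one really needs that the group from \cite{DO19} fixes $P$ by construction. Second, and more seriously, in your argument for (3) the finiteness of $\ker\bigl(\Aut(M)\to\Aut(\Pic(M))\bigr)$ does \emph{not} imply that $c\circ h\circ c=h$ on a finite-index subgroup: from Galois-equivariance you only get that $h\mapsto (c\circ h\circ c)\circ h^{-1}$ takes values in the finite kernel, and since this map is not a homomorphism its vanishing locus need not have finite index. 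The analogous step in Proposition \ref{prop21}(4) works because for Lesieutre's surface the representation on $\Pic$ is \emph{injective}; for $M=S_2$ the Galois-invariance of $H$ must be taken from \cite[Lem.4.6]{DO19} (or established via an injectivity statement for the relevant representation), not deduced from finiteness of the kernel.
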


\begin{proof} As mentioned, we choose the surface $S_2$ in \cite[Sect.4]{DO19} as $M$. We choose the group $H$ in \cite[Lem.4.6]{DO19} as our $H$. Then $M$ and $H$ satisfy the required properties. (1) is clearly satisfied. The first part of (2) follows from \cite[Thm.2.8, Lem.4.6]{DO19} and Theorem \ref{thm2}. For the last part of (2), we may choose one of the two points $P'$ or $P''$ in \cite[Def.2.7]{DO19} as $P$. (3) follows from \cite[Lem.4.6]{DO19} and (4) follows from \cite[Lem.4.5]{DO19}.  
\end{proof}

Now we are ready to complete the proof of Theorem \ref{thm1} (3). 

Let $T$ be a complex abelian variety of dimension $l$, defined over $\R$ as an abstract variety. Let $A$ be a finite subset of $T$ such that $\Aut(T, A)$ is finite and $c(A) = A$ under the complex conjugate map $c$ of $T$ with respect to $T_{\R}$. Such a subset $A$ exists. Indeed, by Lemma \ref{lem34}, there is a finite subset $A' \subset T$ such that $\Aut(T, A')$ is finite. Then we may take $A = c(A') \cup A'$. 

Let 
$$\pi : X_{l} \to M \times T$$
be the blow-up at the points in $\{P\} \times A$. Let $E_i \simeq \P^{l+1}$ ($1 \le i \le |A|$) be the exceptional divisors of $\pi$.   
\begin{claim}\label{claim51} 
\begin{enumerate}
\item $X_l$ is a smooth complex projective variety defined over $\R$ 
and $\dim X_l = l+2$ and $\kappa (X_l) = 0$.
\item $\Aut (X_l)$ is discrete and not finitely generated. Moreover, $X_l$ has infinite many mutually non-isomorphic real forms.
\end{enumerate}
\end{claim}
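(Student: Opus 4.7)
The plan is to mirror the strategy of Proposition \ref{prop31}, with Lesieutre's surface $S$ replaced by $M$ and the rational factor $\P^m$ replaced by the abelian variety $T$; no secondary blow-up analogous to $\pi_2$ is needed, since $\Aut(T,A)$ is already finite by the choice of $A$. Part (1) is essentially formal: smoothness and the dimension count $\dim X_l = l+2$ are immediate from the blow-up construction; the real structure descends to $X_l$ because the center $\{P\} \times A$ is invariant under complex conjugation, as $c(P)=P$ (since $P \in M(\R)$) and $c(A)=A$ by the choice of $A$; and for the Kodaira dimension, $K_T=0$ gives $K_{M \times T} = p_1^* K_M$, so the K\"unneth formula yields $\kappa(M \times T) = \kappa(M) = 0$ by Proposition \ref{prop51}(1), which persists under the blow-up $\pi$.

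The crux of part (2) is to classify non-constant morphisms $\varphi\colon \P^{l+1} \to X_l$ whose image has dimension $l+1$, and to show that any such image must coincide with one of the exceptional divisors $E_i$. Since $T$ contains no rational curves, the composition $p_2 \circ \pi \circ \varphi \colon \P^{l+1} \to T$ is constant, so $\varphi(\P^{l+1})$ lies in a single fiber of $X_l \to T$. A fiber over $t_0 \notin A$ is isomorphic to $M$, a surface of Kodaira dimension $0$: it admits no non-constant morphism from $\P^{l+1}$ when $l \ge 2$ by Lemma \ref{lem32}, and no surjection from $\P^2$ in the $l=1$ case, for otherwise $M$ would be rational. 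A fiber over $t_0 \in A$ is the union of the proper transform of $M \times \{t_0\}$ (still of Kodaira dimension $0$) and an exceptional divisor $E_i \cong \P^{l+1}$; the same reasoning confines the image to $E_i$, and a morphism $\P^{l+1} \to E_i \cong \P^{l+1}$ with $(l+1)$-dimensional image must be surjective, so the image equals $E_i$. Applying this to $f|_{E_i}$ for $f \in \Aut(X_l)$ then shows that $\Aut(X_l)$ permutes the $E_i$'s, and hence descends to an automorphism of $M \times T$ preserving $\{P\} \times A$.

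The next step is to invoke Lemma \ref{lem31} for the projection $p_2\colon M \times T \to T$, which coincides up to translation with the Albanese map of $M \times T$ (note ${\rm Alb}(M)=0$ since $M$ is birational to a K3 surface); by functoriality of the Albanese, $p_2$ is $\Aut(M \times T)$-equivariant. Combined with the discreteness of $\Aut(M)$ from Proposition \ref{prop51}(1), Lemma \ref{lem31} gives $\Aut(M \times T) = \Aut(M) \times \Aut(T)$, and intersecting with the stabilizer of $\{P\} \times A$ then yields
$$\Aut(X_l) = \Aut(M,P) \times \Aut(T,A).$$
By Proposition \ref{prop51}(2), $H \subset \Aut(M,P)$ is a finite-index subgroup of $\Aut(M)$, and therefore of $\Aut(M,P)$; together with $|\Aut(T,A)| < \infty$ this gives $[\Aut(X_l):H] < \infty$. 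Discreteness of $\Aut(X_l)$ then follows from the discreteness of $\Aut(M)$ and the finiteness of $\Aut(T,A)$; non-finite-generation follows from Proposition \ref{prop51}(2) and Theorem \ref{thm2}; and the existence of infinitely many non-isomorphic real forms follows from Theorem \ref{thm4} applied with $G = H$, since the Galois group acts trivially on $H$ by Proposition \ref{prop51}(3) and $H$ contains infinitely many conjugacy classes of involutions by Proposition \ref{prop51}(4).

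The principal obstacle is the classification step above: the case $l=1$ is the most delicate, because the Albanese fibers have dimension exactly $l+1=2$ so a pure dimension count is inconclusive, and one must exploit $\kappa(M)=0$ to rule out surjective morphisms $\P^2 \to M$ (and to its proper transform). Once this is settled, the remaining deductions are routine applications of Lemma \ref{lem31}, Theorem \ref{thm2}, and Theorem \ref{thm4}, in parallel to the arguments of Section \ref{sect3}.
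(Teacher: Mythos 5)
Your proof is correct and follows essentially the same route as the paper: rule out non-constant maps from $\P^{l+1}$ using the absence of rational curves in $T$ and the non-uniruledness of $M$ (so that $\Aut(X_l)$ permutes the $E_i$ and equals $\Aut(M,P)\times\Aut(T,A)$ via the Albanese projection and Lemma \ref{lem31}), then conclude with $H$, Theorem \ref{thm2} and Theorem \ref{thm4}. The only point you omit is the degenerate case $l=0$, where the fibre analysis does not apply since $\P^{1}$ does admit non-constant morphisms to $M$; the paper dispatches that case in one line by reducing it directly to Proposition \ref{prop51}.
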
 

\begin{proof} The assertion (1) is clear from the construction. We show the assertion (2). If $l = 0$, then the result follows from Proposition \ref{prop51}. From now, we assume that $l \ge 1$. Let $f \in \Aut (X_l)$. Since $T$ has no rational curve, it follows that 
$$\pi(f(E_i)) \subset M \times A.$$
 Since $f(E_i) \simeq \P^{l+1}$ with $l+1 \ge 2$ and $M$ is not covered by rational curves by $\kappa (M) = 0$, it follows that $\pi(f(E_i))$ is a point. Thus $f(\{E_i\}_{i=1}^{|A|}) = \{E_i\}_{i=1}^{|A|}$ and therefore 
$$\Aut (X_l) = \Aut(X_l, \{E_i\}_{i=1}^{|A|}) = \Aut(M \times T, \{P\} \times A).$$
Since the Albanese morphism $M \times T \to T$ is preserved by $\Aut (M \times T)$ and $\Aut (M)$ is discrete, it follow from Lemma \ref{lem31} that    
$$\Aut(M \times T) = \Aut(M) \times \Aut(T).$$
Hence
$$\Aut(X_l) = \Aut(M, P) \times \Aut(T, A).$$
Since $\Aut(T, A)$ is finite, $\Aut(M, P) \times \{\id_T\}$ is a finite index subgroup of $\Aut(X_l)$. Hence $H \times \{\id_T\} \simeq H$, where $H$ is the group in Proposition \ref{prop51}, is also a finite index subgroup of $\Aut(X_l)$. Hence $\Aut(X_l)$ is discrete and is not finitely generated by Proposition \ref{prop51} (2) and Theorem \ref{thm2}. Then $X_l$ has infinitely many mutually non-isomorphic real forms by Proposition \ref{prop51} (3), (4) and Theorem \ref{thm4}. 
\end{proof}
Let $Z_{m} \subset \P^{m+1}$ ($m \ge 1$) be a smooth complex hypersurface of degree $m+3$ defined over $\R$. Set 
$$Y_{l+m} := X_l \times Z_{m}.$$
\begin{claim}\label{claim52} 
\begin{enumerate}
\item $Y_{m+l}$ is a smooth complex projective variety defined over $\R$ with $\dim Y_{l+m} = 2+l +m$ and $\kappa (Y_{l+m}) = \kappa (Z_m) = m$.
\item $\Aut (Y_{l+m})$ is discrete and not finitely generated. Moreover, $Y_{l+m}$ has infinite many mutually non-isomorphic real forms.
\end{enumerate}
\end{claim}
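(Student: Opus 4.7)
For part (1), the dimension formula is immediate. Adjunction on $Z_m \subset \P^{m+1}$ gives $K_{Z_m} = \sO_{Z_m}(m+3-m-2) = \sO_{Z_m}(1)$, which is ample, so $Z_m$ is of general type with $\kappa(Z_m) = m$. The Künneth formula $H^0(Y_{l+m}, kK_{Y_{l+m}}) \cong H^0(X_l, kK_{X_l}) \otimes H^0(Z_m, kK_{Z_m})$ combined with Claim \ref{claim51}(1) then yields additivity $\kappa(Y_{l+m}) = \kappa(X_l) + \kappa(Z_m) = 0 + m = m$, and the $\R$-structure is inherited from the factors.

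For part (2), my plan is to exhibit $\Aut(X_l) \times \{\id_{Z_m}\} \simeq \Aut(X_l)$ as a finite-index subgroup of $\Aut(Y_{l+m})$, after which the discreteness and non-finite generation assertions follow from Theorem \ref{thm2} and Claim \ref{claim51}(2). The central step is to produce a homomorphism $\rho: \Aut(Y_{l+m}) \to \Aut(Z_m)$ under which the projection $p_2: Y_{l+m} \to Z_m$ is equivariant. To do this, I would fix $k$ large and divisible enough so that $\Phi_{|kK_{Z_m}|}: Z_m \hookrightarrow \P(H^0(Z_m, kK_{Z_m})^*)$ is an embedding (since $K_{Z_m}$ is ample) and $\Phi_{|kK_{X_l}|}(X_l)$ is a single point (since $\kappa(X_l) = 0$, so the image is a $0$-dimensional irreducible variety). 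The Künneth identification together with the Segre embedding then shows that $\Phi_{|kK_{Y_{l+m}}|}$ factors as $\iota \circ p_2$ for an embedding $\iota$ identifying the image with $Z_m$. Since $\Aut(Y_{l+m})$ acts linearly on $H^0(Y_{l+m}, kK_{Y_{l+m}})$ and preserves the image, we obtain $\rho$ intertwined with $p_2$; its image is finite since $\Aut(Z_m)$ is finite (by Theorem \ref{thm3} applied together with the embedding by $\Phi_{|kK_{Z_m}|}$), so $\ker \rho$ has finite index in $\Aut(Y_{l+m})$.

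With this in hand, Lemma \ref{lem31} applied to $p_2$ (using that $\Aut(X_l)$ is discrete by Claim \ref{claim51}(2)), combined with the fact that $\ker \rho$ acts trivially on $Z_m$, forces $\ker \rho = \Aut(X_l) \times \{\id_{Z_m}\}$; the reverse containment is obvious, so this is an equality, and $\Aut(X_l)$ sits as a finite-index subgroup of $\Aut(Y_{l+m})$. For the real-forms statement, let $H \subset \Aut(X_l)$ denote the finite-index subgroup $H \times \{\id_T\}$ inherited from Proposition \ref{prop51} (as in the proof of Claim \ref{claim51}(2)); then $H \times \{\id_{Z_m}\}$ is a finite-index subgroup of $\Aut(Y_{l+m})$ on which $\Gal(\C/\R)$ acts trivially (since $Z_m$ is defined over $\R$ and $H$ is Galois-invariant) and which still has infinitely many conjugacy classes of involutions; Theorem \ref{thm4} then supplies infinitely many non-isomorphic real forms of $Y_{l+m}$. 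I expect the main technical obstacle to be the precise bookkeeping of the pluricanonical factorization $\Phi_{|kK_{Y_{l+m}}|} = \iota \circ p_2$, but this is a standard consequence of the Künneth decomposition together with the general-type property of $Z_m$.
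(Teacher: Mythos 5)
Your proposal is correct and follows essentially the same route as the paper: identify the (pluri)canonical map of $Y_{l+m}$ with the second projection $p_2$ via the K\"unneth decomposition (the paper does this already with $k=1$, since $|K_{X_l}|$ is a single element and $K_{Z_m}$ is very ample), deduce $\Aut(Y_{l+m})$-equivariance of $p_2$, apply Lemma \ref{lem31} together with the finiteness of $\Aut(Z_m)$ from Theorem \ref{thm3} to exhibit $H$ as a finite-index subgroup, and conclude with Theorems \ref{thm2} and \ref{thm4}. Your packaging via the homomorphism $\rho$ with finite image is only a cosmetic variation.
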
 
\begin{proof} Again, the assertion (1) is clear from the construction. We show the assertion (2). Since $|K_{X_l}|$ consists of a single element and $K_{Z_{m}}$ is very ample, the canonical map
$$\Phi_{|K_{Y_{l+m}}|} : Y_{l+m} = X_{l} \times Z_m\to Z_{m}$$
coincides with the second projection $p_2 : Y_{l+m} \to Z_{m}$ for the same reason as in the proof of Claim \ref{claim30}. In particular, the second projection $p_2$ is $\Aut(Y_{l+m})$-equivariant. Since $\Aut(X_l)$ is discrete by Claim \ref{claim51}, it follows from Lemma \ref{lem31} that
$$\Aut(Y_{l+m}) = \Aut(X_l) \times \Aut(Z_m).$$
Since $\Aut (Z_m)$ is finite by Theorem \ref{thm3}, as before, the group 
$$H \times \{\id_T\} \times \{\id_{Z_m}\},$$ 
where $H$ is the group in Proposition \ref{prop51}, is a finite index subgroup of $\Aut(Y_{l+m})$ by Claim \ref{claim51}. The result now follows from the same reason as in the last part of the proof of Claim \ref{claim51}. 
\end{proof}
Theorem \ref{thm1} (3) now follows from Claim \ref{claim51} with $l \ge 1$ 
and Claim \ref{claim52} with $l \ge 0$ and $m \ge 1$.

\end{document}